\documentclass[10pt, reqno]{amsart}
\usepackage{amsmath,amssymb,amsthm, amsfonts, amssymb, graphicx, color, array}
\usepackage[bookmarksnumbered, colorlinks, plainpages]{hyperref}
\usepackage{makecell}
\usepackage{indentfirst, amssymb,amsmath,amsthm}
\newtheorem*{theoA}{Theorem A}
\newtheorem*{theoB}{Theorem B}
\newtheorem*{theoC}{Theorem C}
\newtheorem*{theoD}{Theorem D}
\newtheorem*{theoE}{Theorem E}
\newtheorem*{theoF}{Theorem F}

\newtheorem{theo}{Theorem}[section]
\newtheorem{lem}{Lemma}[section]

\newtheorem{exm}{Example}[section]
\newtheorem{defi}{Definition}[section]
\newtheorem{rem}{Remark}[section]
\newtheorem{ques}{Question}[section]

\newcommand{\ol}{\overline}
\newcommand{\be}{\begin{equation}}
	\newcommand{\ee}{\end{equation}}
\newcommand{\beas}{\begin{eqnarray*}}
	\newcommand{\eeas}{\end{eqnarray*}}
\newcommand{\bea}{\begin{eqnarray}}
	\newcommand{\eea}{\end{eqnarray}}
\newcommand{\lra}{\longrightarrow}
\numberwithin{equation}{section}
\begin{document}
		\title[Identification of $L$-functions in the extended  ....] {\LARGE I\MakeLowercase {dentification of $L$-functions in the extended \MakeUppercase {S}elberg\\ class by preimages of finite sets}}
	
	\author{Arpita Kundu$^{*}$ and Abhijit Banerjee}	
	
	\address{ Department of Mathematics, Techno India University, West Bengal, India.}
	\email{arpitakundu.math.ku@gmail.com}
	\address{ Department of Mathematics, University of Kalyani, West Bengal 741235, India.}
	\email{abanerjee\_kal@yahoo.co.in, abanerjeekal@gmail.com}
	\maketitle
	\let\thefootnote\relax
	\footnotetext{2010 Mathematics Subject Classification:  Primary 30D35; Secondary 11M06, 11M36, 11M41}
	\footnotetext{Key words and phrases: Dirichlet series,  $\mathcal{L}$ function, Selberg class, meromorphic function, shared sets, Gamma function}
	\footnotetext{Type set by \AmS -\LaTeX}
	\footnotetext{*Corresponding author}
	\begin{abstract}
		In 2023, Li-Du-Yi [Complex Var. Elliptic Equ., 68(10)2023, 1653-1677] established that if two $L$-functions $L_1$ and $L_2$ in the extended Selberg class  $\mathcal{S}^{\#}$ have positive
		degrees, satisfy the same functional equation with $a(1) = 1$ and share a set $S=\{c_1,c_2,c_3\},$ where $c_i\in\mathbb{C};\;i=1,2,3,$, then $L_1 = L_2.$ Subsequently, the the present authors [Complex Var. Elliptic Equ.,70(4), 716–734] generalized this result by proving that if $L_1, L_2\in S^{\#}$ have positive degrees, satisfy the same functional equation with $a(1)=1$ and share an arbitrary set $\{\alpha_1, \alpha_2,\ldots, \alpha_t\}\subset\mathbb{C}$ counting multiplicities, for any $t\geq 1$, then
		$L_1 = L_2$. In the present paper, we remove the requirement that $L_1$ and $L_2$ satisfy the same functional equation, thereby obtaining the earlier uniqueness conclusion under substantially weaker hypotheses. As a consequence, we prove that any polynomial with distinct zeros is a strong uniqueness polynomial for $L$-functions in $\mathcal{S}^{\#}$. This result strengthens and extends the work of Khoai–An [The Ramanujan J., 58(1)(2022),253-267] and certain results in  [J. Theor. Nr. Bordx., $\mathbf{36(3)}$ (2024), 967-985]. Furthermore, we take steps toward extending the results of [Math. Z., $\mathbf{272}$(2012), 1097-1102] broader setting.
		\end{abstract}
		\section{Introduction}
	\subsection*{1. Zeros as Information Carriers}
	
	A recurring theme in modern analytic number theory is that \emph{zeros encode structure}.
	Rather than treating an $L$-function merely as a formal Dirichlet series or an Euler product,  one may regard it as a global analytic object whose identity is partially—or even completely determined by partial information about its value distribution.
	This philosophy, which traces back to Riemann’s 1859 memoir, has led to a wide range of \emph{uniqueness problems} for $L$-functions.
	At a conceptual level, these problems ask the following question:
	
	\begin{quote}
		\emph{How much information about an $L$-function is encoded in the values it
			assumes—or avoids—on specified subsets of the complex plane? }
	\end{quote}
	
	The most familiar example is the Riemann zeta function, initially defined for $\Re(s)>1$ by
	\[
	\zeta(s)=\sum_{n=1}^{\infty}\frac{1}{n^s}.
	\]
	It admits a meromorphic continuation to the entire complex plane with a simple pole at $s=1$ of residue $1$.
	Moreover, the Euler product
	\[
	\zeta(s)=\prod_{p}\left(1-p^{-s}\right)^{-1}, \qquad \Re(s)>1,
	\]
	and the functional equation relating $s$ and $1-s$ illustrate how arithmetic information manifests as analytic rigidity.	Such rigidity motivates the systematic study of uniqueness phenomena for
	general $L$-functions.
	
	\subsection*{2. Structural Classes of $L$-Functions}
	
	Motivated by the analytic properties of $\zeta(s)$, Selberg introduced in 1989 the class $\mathcal{S}$ of Dirichlet series
	\[
	\mathcal{L}(s)=\sum_{n=1}^{\infty}\frac{a(n)}{n^s},
	\]
	satisfying a Ramanujan-type bound, analytic continuation, a Riemann-type functional equation, and an Euler product (see \cite{Selberg-92}).
	An $L$-function $\mathcal{L}\in\mathcal{S}$ has degree
	\[
	d_{\mathcal{L}}=2\sum_{j=1}^{K}\lambda_j,
	\]
	where $\lambda_j$ appear in the associated functional equation.
	
	While the Euler product is a defining feature of $\mathcal{S}$, from the viewpoint of value distribution it is not indispensable.
	There exist convergent Dirichlet series satisfying the first three Selberg axioms but lacking an Euler product.
	This observation led Kaczorowski and Perelli \cite{Kaczorowski_Perelli} to introduce the \emph{extended Selberg class} $\mathcal{S}^{\#}$, consisting of non-identically vanishing $L$-functions satisfying axioms (i)--(iii) of $\mathcal{S}$.
	Clearly,
	\[
	\mathcal{S}\subset\mathcal{S}^{\#}.
	\]
	
	A notable feature of $\mathcal{S}^{\#}$ is the presence of degree-zero functions that do not belong to $\mathcal{S}$.
	Such functions do not admit Euler products but nevertheless exhibit nontrivial analytic behavior, making $\mathcal{S}^{\#}$ a natural framework for investigating uniqueness problems.

\begin{table}[h]
	\centering
	\renewcommand{\arraystretch}{1.15}
	\caption{Comparison of Selberg-type classes}
	\small
	\begin{tabular}{||
			>{\centering\arraybackslash}p{1.6cm}|
			>{\centering\arraybackslash}p{1.4cm}|
			>{\centering\arraybackslash}p{1.8cm}|
			>{\centering\arraybackslash}p{1.6cm}|
			>{\centering\arraybackslash}p{1.9cm}|
			>{\centering\arraybackslash}p{2.4cm}|
			>{\centering\arraybackslash}p{1.2cm}||}
		\hline\hline
		Class
		& Dirichlet series
		& Functional equation
		& Euler product
		& Ramanujan condition
		& Analytic continuation
		& Degree \\ \hline
		\hline
		$\mathcal{S}$
		& Yes
		& Yes
		& Yes
		& Yes
		& Entire or pole at $s=1$
		& $>0$ \\ \hline
		$\mathcal{S}^{\#}$
		& Yes
		& Yes
		& Not required
		& Not required
		& Entire or pole at $s=1$
		& $\ge 0$ \\ \hline\hline
	\end{tabular}
\end{table}

	\subsection*{3. Value Distribution and Sharing Problems}
	
	To study uniqueness questions for $L$-functions, we place them within the framework of Nevanlinna theory.
	Let $\mathcal{M}(\mathbb{C})$ denote the field of meromorphic functions on the complex plane.
	We use standard notations such as the characteristic function $T(r,f)$, the proximity function $m(r,f)$, and the (reduced) counting functions $N(r,f)$ and $\overline{N}(r,f)$ as described in \cite{C.C Yang-H.X.Yi}.
	The order of a meromorphic function $f$ is defined by
	\[
	\rho(f)=\limsup_{r\to\infty}\frac{\log T(r,f)}{\log r}.
	\]
	
	Within this setting, sharing problems ask whether two non-constant meromorphic functions must coincide if they share prescribed values or sets, either counting multiplicities (CM) or ignoring them (IM).
	While classical Nevanlinna theory typically requires sharing several values for uniqueness, the presence of functional equations and growth restrictions allows far stronger conclusions for $L$-functions.
	\par A fundamental result in this direction was obtained by Steuding \cite{Steuding_Sprin(07)}, who showed that two $L$-functions in the Selberg class sharing a single finite value CM must coincide. However, providing the following counterexample, Hu-Li \cite{Hu_LI_Can(16)} pointed out that Steuding's \cite{Steuding_Sprin(07)} result cease to hold when $c = 1$.  
	\begin{exm}
		Let $\mathcal{L}_1=1+\frac{2}{2^{2s}}$ and
		$\mathcal{L}_2=1+\frac{3}{3^{2s}}$ with $a(1)=1$.
		Both functions satisfy axioms (i) and (ii) and trivially fulfill a functional
		equation of degree zero.
		They share the value $\{1\}$ CM, yet $\mathcal{L}_1\neq \mathcal{L}_2$.
	\end{exm}
	To formulate subsequent results, we introduce the following definitions.	\begin{defi}
		A polynomial $P$ is called a uniqueness polynomial for meromorphic functions if for any two non-constant meromorphic functions $f$, $g\in M(\mathbb{C})$, the condition $P(f) = P(g)$ implies $f= g$.
		\par On the other hand, for a non-zero constant $c$, if the condition $P(f)=cP(g)$ implies $f= g$, then $P$ is called a strong uniqueness polynomial.
	\end{defi}
	\begin{defi}
		Let	$f$ be a non-constant meromorphic functions in $\mathcal{M}(\mathbb{C})$ and $c\in\mathbb{C}$. By $Z^{-}(f-c)$ we denote the zero set of $f-c$ in the right half plane, i.e., $Z^{-}(f-c)=\{s : f-c=0 ;\;Re(s)<0\;\}.$  \par Similarly  we define, $Z^{+}(f-c)=\{s : f-c=0 ;\;Re(s)>0\;\}.$
	\end{defi}

 Early uniqueness results for $L$-functions were primarily concerned with the sharing of isolated values.
 Such results demonstrated that under suitable hypotheses, the coincidence of a single value or a small number of values could force two $L$-functions to be identical.
 However, these conclusions often relied on restrictive assumptions, such as excluding certain exceptional values or imposing strong structural constraints.
 
 Subsequent developments extended this perspective from isolated values to finite sets and polynomial images.
 Li \cite{Li_Adv_11} initiated this transition by considering the sharing of two values in the extended Selberg class under the assumption of a common functional equation with $a(1)=1$.
 Later, Li--Du--Yi \cite{Li_Du_Yi_Comp.var(22)} strengthened this approach by allowing larger sets of shared values. However, for an arbitrary set  Li--Du--Yi \cite{Li_Du_Yi_Comp.var(22)} obtained the following theorem:
 \begin{theoA}\cite{Li_Du_Yi_Comp.var(22)}
 	Let $P(w)$ be a monic polynomial of degree $q$ without multiple zeros such that
 	\[
 	P'(w)=q(w-e_1)^{q_1}(w-e_2)^{q_2},
 	\]
 	where $e_1$ and $e_2$ are two distinct finite complex numbers, and let
 	$S=\{w\in\mathbb{C}: P(w)=0\}$. Suppose that $\mathcal{L}_1$ and $\mathcal{L}_2$
 	are two non-constant $L$-functions in the extended Selberg class $\mathcal{S}^{\#}$
 	with $a(1)=1$, satisfying the same functional equation and
 	$E_{\mathcal{L}_1}(S)=E_{\mathcal{L}_2}(S)$. If $q\geq 4$ and
 	$P(e_1)=\pm P(e_2)$, then $\mathcal{L}_1=\mathcal{L}_2$.
 \end{theoA}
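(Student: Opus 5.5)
The plan is to turn the set-sharing hypothesis into an algebraic relation between $P(\mathcal{L}_1)$ and $P(\mathcal{L}_2)$, and then use the growth and normalisation of $L$-functions to force $\mathcal{L}_1=\mathcal{L}_2$.

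\textbf{Step 1 (auxiliary function).} Since $P$ has simple zeros and $E_{\mathcal{L}_1}(S)=E_{\mathcal{L}_2}(S)$, the functions $P(\mathcal{L}_1)$ and $P(\mathcal{L}_2)$ have the same zeros with the same multiplicities. Both $L$-functions are meromorphic of order at most $1$, and their only possible pole is at $s=1$, where they have the same order (forced by the common functional equation). Hence
\[
H:=\frac{P(\mathcal{L}_1)}{P(\mathcal{L}_2)}
\]
is entire, zero-free and of order at most $1$, so $H=e^{as+b}$.

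\textbf{Step 2 (show $a=0$).} Let $\Re(s)=\sigma\to+\infty$. Then $\mathcal{L}_j(s)\to a_j(1)=1$ uniformly in $t$, so $H(s)\to P(1)/P(1)$ whenever $P(1)\neq 0$. This forces $\Re(a)=0$. To exclude a purely imaginary $a\neq0$, we expand both Dirichlet series: $H$ is a quotient of two generalized Dirichlet series in $\Re(s)>1$. A nonzero term $e^{i\beta s}$ with $\beta\neq 0$ is incompatible with an expansion in powers $n^{-s}$, by uniqueness of generalized Dirichlet series coefficients. The case $P(1)=0$ is handled by comparing the leading terms $a_j(n_j)n_j^{-s}$ of $\mathcal{L}_j-1$ instead. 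Hence $H\equiv c$, i.e.
\[
P(\mathcal{L}_1)=c\,P(\mathcal{L}_2).
\]

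\textbf{Step 3 (from $P(\mathcal{L}_1)=cP(\mathcal{L}_2)$ to $\mathcal{L}_1=\mathcal{L}_2$).} Put $F=\mathcal{L}_1/\mathcal{L}_2$ and use the standard Fujimoto-type argument. The condition $P'=q(w-e_1)^{q_1}(w-e_2)^{q_2}$ says $P$ has only two distinct critical points. The condition $P(e_1)=\pm P(e_2)$ together with $q\ge4$ is the criterion under which $P$ is a strong uniqueness polynomial for functions lacking the relevant exceptional behaviour. If $c\neq1$, the second main theorem applied to $P(\mathcal{L}_1)$, counting the points above $cP(e_i)$ and $P(e_i)$, gives a contradiction, because the ramification is too concentrated. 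If $c=1$, the curve $P(x)=P(y)$ has only the diagonal as a component of genus at most $1$, so every other factor would give a Picard-type obstruction. The finite order of $\mathcal{L}_j$, together with $\mathcal{L}_j\to1$, then rules out nonconstant parametrisations other than $x=y$. This gives $\mathcal{L}_1=\mathcal{L}_2$.

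\textbf{Main obstacle.} I expect the hardest part to be the case analysis $c\neq1$ in Step 3, especially handling $P(e_1)=-P(e_2)$ with $c=-1$. There the relation $P(x)=-P(y)$ may admit nontrivial factors. I would first try to exclude them by letting $\sigma\to+\infty$: this gives $P(1)=cP(1)$, so $c=1$ whenever $P(1)\neq0$. When $P(1)=0$, I would compare the next Dirichlet coefficients. Together these reduce everything to the case $c=1$.
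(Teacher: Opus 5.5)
\textbf{There is a genuine gap in Step 3 and in your plan for $c\neq1$.} Your argument never uses the common functional equation, and without it the conclusion fails for exactly the polynomials in Theorem A.

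First, $P(e_1)=P(e_2)$ cannot occur when $P'=q(w-e_1)^{q_1}(w-e_2)^{q_2}$: otherwise $P-P(e_1)$ would have $q_1+1+q_2+1=q+1$ zeros. So the hypothesis really says $P(e_1)=-P(e_2)$. That is the standard obstruction to $P$ being a strong uniqueness polynomial, not a criterion for it.

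Concretely, take $m\ge2$, $q=2m+1$, $a>0$ and $P(w)=q\int_1^w\bigl((u-1)^2-a^2\bigr)^m\,du$.
\begin{itemize}
\item $P$ is monic with $P'(w)=q(w-1-a)^m(w-1+a)^m$.
\item Its zeros are simple, because $P(1\pm a)\neq0$.
\item $P(2-w)=-P(w)$, so $P(e_1)=-P(e_2)$ and $P(1)=0$.
\end{itemize}
For any $\mathcal{L}\in\mathcal{S}^{\#}$, the pair $\mathcal{L}_1=2-\mathcal{L}$, $\mathcal{L}_2=\mathcal{L}$ satisfies $P(\mathcal{L}_1)=-P(\mathcal{L}_2)$, hence $E_{\mathcal{L}_1}(S)=E_{\mathcal{L}_2}(S)$. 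Both are Dirichlet series with $a(1)=1$, of order at most one, with the same pole at $s=1$, and both tend to $1$ as $\sigma\to+\infty$. So every property you actually use in Steps 1--3 holds, yet $\mathcal{L}_1\neq\mathcal{L}_2$.

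Consequently, no second-main-theorem or genus argument can dispose of $c=-1$. Nor can comparing further Dirichlet coefficients force $c=1$ when $P(1)=0$, which is precisely the case you flagged. What excludes this pair is the functional equation: $\mathcal{L}_1+\mathcal{L}_2\equiv2$ would force $\chi\equiv1$.

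Your claim for $c=1$ is also false as stated. When $q=4$, the residual curve $(P(x)-P(y))/(x-y)=0$ is a plane cubic, so it has genus at most $1$.

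Step 2 has the same defect when $P(1)=0$. Comparing leading terms gives only $a=\log(n_2/n_1)$, i.e. $H=C(n_2/n_1)^s$. The Hu--Li pair $1+2\cdot4^{-s}$, $1+3\cdot9^{-s}$ (sharing $\{1\}$ CM) shows that information from $\sigma\to+\infty$ alone cannot force $n_1=n_2$.

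\textbf{The missing ingredient is the left half-plane.} The paper does not reprove Theorem A; it presents it as subsumed by its Theorems 1.1 and 1.2, whose proofs go through common trivial zeros and the limit $\sigma\to-\infty$ in the functional equation. In your setting this is quick.

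Write $\mathcal{L}_j(s)=\chi(s)\overline{\mathcal{L}_j(1-\bar s)}$ with the same $\chi$, and take any $t$ off the finitely many horizontal lines carrying trivial zeros. As $\sigma\to-\infty$ you have $\overline{\mathcal{L}_j(1-\bar s)}\to1$ and $|\chi(\sigma+it)|\to\infty$ (by Stirling in positive degree, and by $Q>1$ in degree zero). Hence $\mathcal{L}_1/\mathcal{L}_2\to1$, and therefore $P(\mathcal{L}_1)/P(\mathcal{L}_2)\to1$.

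Apply this to $H=(s-1)^ke^{as+b}$; keep the factor $(s-1)^k$ rather than asserting equal pole orders. Using all admissible $t$, this gives $k=0$, $a=0$ and $c=1$ at once, whatever $P(1)$ is.

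Finally, write $P(\mathcal{L}_1)-P(\mathcal{L}_2)=(\mathcal{L}_1-\mathcal{L}_2)\,\Phi(\mathcal{L}_1,\mathcal{L}_2)$ with $\Phi(x,y)=(P(x)-P(y))/(x-y)$. Along the same lines $\Phi(\mathcal{L}_1,\mathcal{L}_2)=q\chi^{q-1}(1+o(1))\not\equiv0$, so $\mathcal{L}_1\equiv\mathcal{L}_2$. The hypotheses $q\ge4$ and $P(e_1)=\pm P(e_2)$ are never needed, consistent with the paper's remark that these restrictions are redundant.
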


 Building on this, the present authors \cite{Kundu_Banerjee_Comp.var} eliminated the restriction on the cardinality of the shared set in the case of
 $L$-functions of positive degree.
 \begin{theoB}\cite{Kundu_Banerjee_Comp.var}
 	If two non-constant $L$-functions $\mathcal{L}_1$ and $\mathcal{L}_2$ in the
 	extended Selberg class $\mathcal{S}^{\#}$ have positive degrees, satisfy the
 	same functional equation with $a(1)=1$ and share a set
 	$S=\{\alpha_1,\alpha_2,\ldots,\alpha_t\}$ CM, where $t\geq 1$ is a positive
 	integer and $\alpha_1,\alpha_2,\ldots,\alpha_t$ are $t$ distinct complex
 	constants, then $\mathcal{L}_1=\mathcal{L}_2$.
 \end{theoB}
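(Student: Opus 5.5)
We would argue via Nevanlinna theory combined with the growth and Dirichlet-series structure of $L$-functions. Write $P(w)=\prod_{j=1}^{t}(w-\alpha_j)$. Since $\mathcal{L}_1,\mathcal{L}_2\in\mathcal{S}^{\#}$ have at most a pole at $s=1$, the functions $(s-1)^{m}P(\mathcal{L}_i)$ are entire of order at most $1$ for a suitable $m$. The CM sharing of $S$ means that $P(\mathcal{L}_1)$ and $P(\mathcal{L}_2)$ have the same zeros with multiplicities. By Hadamard factorization, and after accounting for the possible pole at $s=1$ (whose orders coincide because both functions satisfy the same functional equation and hence have the same degree and pole data), we obtain
\[
P(\mathcal{L}_1(s))=e^{as+b}\,P(\mathcal{L}_2(s))
\]
for constants $a,b$.

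Next we would kill the exponential factor using the Dirichlet series. As $\Re s\to+\infty$, $\mathcal{L}_i(s)\to a(1)=1$, so both $P(\mathcal{L}_i(s))\to P(1)$. If $P(1)\neq 0$, comparing the two sides along the real axis forces $e^{as+b}\to 1$, so $a=0$ and $e^{b}=1$. If $P(1)=0$, we expand $P(\mathcal{L}_i)$ as a generalized Dirichlet series $c\,a_i(n_i)^{k}n_i^{-ks}+\cdots$ with leading nonzero terms. Matching these gives $e^{as}$ equal to a ratio of terms $n^{-s}$, so $a$ is real, i.e. $a=-\log(\text{rational})$, and the relation becomes one between two Dirichlet series.

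To exclude the case $a\neq 0$ we use the functional equation. Since $\mathcal{L}_1$ and $\mathcal{L}_2$ share the same $\Lambda$-factor $Q^{s}\prod\Gamma(\lambda_j s+\mu_j)$ and the same $\omega$, we have $\mathcal{L}_i(1-\bar s)$ expressed by $\overline{\mathcal{L}_i(s)}$ times a common factor $\Delta(s)$ with $|\Delta(s)|\to\infty$ super-exponentially as $\Re s\to -\infty$ when the degree is positive. Near $\Re s\to-\infty$, $P(\mathcal{L}_i)\sim\mathcal{L}_i^{t}$, and the ratio $\mathcal{L}_1^{t}/\mathcal{L}_2^{t}=\bigl(\overline{\mathcal{L}_1(1-\bar s)}/\overline{\mathcal{L}_2(1-\bar s)}\bigr)^{t}\to 1$. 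This forces $e^{as+b}\to 1$ along a sequence with $\Re s\to-\infty$. Combined with the right-half-plane behaviour, this gives $a=0$ and $e^{b}=1$, so $P(\mathcal{L}_1)=P(\mathcal{L}_2)$.

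Finally we deduce $\mathcal{L}_1=\mathcal{L}_2$ from $P(\mathcal{L}_1)=P(\mathcal{L}_2)$. Writing $P(x)-P(y)=(x-y)Q(x,y)$, suppose $\mathcal{L}_1\neq\mathcal{L}_2$. Then $Q(\mathcal{L}_1,\mathcal{L}_2)\equiv 0$, and we would again compare asymptotics on the left half-plane. There $\mathcal{L}_i\sim\overline{\mathcal{L}_i(1-\bar s)}\,\Delta(s)$ with the same $\Delta$, so $\mathcal{L}_1/\mathcal{L}_2\to 1$ while both tend to $\infty$. The homogeneous top part of $Q$ is $\sum_{k=0}^{t-1}x^{k}y^{t-1-k}$, which at $x=y$ equals $t\,x^{t-1}\neq 0$; this contradicts $Q(\mathcal{L}_1,\mathcal{L}_2)=0$ for large $|\mathcal{L}_i|$. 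I expect the main obstacle to be making this rate comparison rigorous: one needs $\mathcal{L}_1/\mathcal{L}_2-1$ to be small relative to the lower-order terms of $Q$, which requires a careful expansion using the Dirichlet series of $\mathcal{L}_i(1-\bar s)$ on the right. The case $t=1$ is trivial. The key inputs are positive degree, which guarantees $|\Delta|\to\infty$, and $a(1)=1$, which ensures the normalization.
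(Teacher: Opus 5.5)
Your proof is correct, but it takes a different route from the paper. The paper obtains this statement as the special case $\chi_1=\chi_2$ of Theorem~\ref{t1.1}, and it argues \emph{at} the common trivial zeros $s_n$:
\begin{itemize}
\item When $1\in S$, it uses the first nonvanishing Dirichlet coefficients as $\sigma\to+\infty$ to reduce $(s-1)^ke^{\hat a s+\hat b}$ to $e^{\hat b}q^{-s}$ with $q\in\mathbb{Q}^+$.
\item It then evaluates at $s_n$ to get $q=1$ and $e^{\hat b}=1$. If $0\notin S$ the quotient there is $P(0)/P(0)=1$; if $0\in S$ it first cancels $\mathcal{L}_1/\mathcal{L}_2$ via the functional equation.
\item Finally it passes from $P(\mathcal{L}_1)=cP(\mathcal{L}_2)$ to $\mathcal{L}_1=\mathcal{L}_2$ by expanding in powers and comparing orders of vanishing at a trivial zero (Lemma~\ref{l2.5}), eliminating $a_{t-1},a_{t-2},\dots$ one at a time.
\end{itemize}

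You instead work \emph{off} the trivial zeros, on a horizontal line where $|\Delta|\to\infty$, using only $\mathcal{L}_1/\mathcal{L}_2\to1$ and $|\mathcal{L}_i|\to\infty$. This is essentially the paper's degree-zero proof of Theorem~\ref{t1.2}, with $\Delta(s)$ in the role of $Q^{1-2s}$. It avoids the case split on $0,1\in S$ and all the multiplicity bookkeeping. In fact your left-half-plane limit alone removes the factor: $|s-1|^ke^{(\mathrm{Re}\,a)\sigma+O(1)}\to1$ forces $\mathrm{Re}\,a=0$ and $k=0$, and convergence of $e^{i(\mathrm{Im}\,a)\sigma}$ forces $\mathrm{Im}\,a=0$. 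So your right-half-plane Dirichlet analysis is not needed.

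What the paper's trivial-zero machinery buys is Theorem~\ref{t1.1} itself. There $\chi_1\neq\chi_2$ a priori, and one must first show that the trivial zeros coincide and that $\chi_1/\chi_2$ is rational with a nonzero limit. Your argument uses $\chi_1=\chi_2$ precisely to get $\mathcal{L}_1/\mathcal{L}_2\to1$.

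Three small corrections.
\begin{itemize}
\item The obstacle you anticipate at the end is not there. Write $Q(x,y)=\sum_{m=0}^{t-1}c_m\sum_{k=0}^{m}x^ky^{m-k}$ with $c_{t-1}=1$. Then $Q(\mathcal{L}_1,\mathcal{L}_2)/\mathcal{L}_2^{t-1}=\sum_{k=0}^{t-1}(\mathcal{L}_1/\mathcal{L}_2)^k+O(|\mathcal{L}_2|^{-1})\to t\neq0$, so no rate for $\mathcal{L}_1/\mathcal{L}_2-1$ is required.
\item Equal pole orders at $s=1$ do not follow from a common functional equation. For example, $\zeta(s)L(s,\chi_{-20})$ and $L(s,\chi_{-4})L(s,\chi_5)$ have identical $Q$, gamma factors and $\omega$, but only the first has a pole. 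Just keep the factor $(s-1)^k$; the same limit removes it.
\item The claim $|\Delta(s)|\to\infty$ fails near the trivial zeros, which are zeros of $\Delta$. State it on a line $\mathrm{Im}\,s=t_1$ with $\lambda_jt_1+\mathrm{Im}\,\mu_j\neq0$ for all $j$. There the reflection formula gives $|\sin\pi(\lambda_js+\mu_j)|\ge|\sinh\pi(\lambda_jt_1+\mathrm{Im}\,\mu_j)|>0$, and Stirling gives $\log|\Delta(\sigma+it_1)|=d_{\mathcal{L}}|\sigma|\log|\sigma|+O(|\sigma|)$.
\end{itemize}
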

 
  Further advances were made by Khoai and An \cite{Khoai_An_Ramanujan.J}, who established criteria under which the zero set of a polynomial serves as a unique  range set for $L$-functions and identified conditions ensuring that the polynomial itself is a strong uniqueness polynomial for $L$-functions. Their
  results are stated below.
  \begin{theoC}\cite{Khoai_An_Ramanujan.J}
  	Let $P$ be a polynomial satisfying $P(1)P'(1)\neq 0$. Then $P$ is a strong
  	uniqueness polynomial for $L$-functions of positive degree with $a(1)=1$.
  \end{theoC}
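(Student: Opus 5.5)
Suppose $\mathcal{L}_1,\mathcal{L}_2$ are $L$-functions of positive degree with $a(1)=1$ and $P(\mathcal{L}_1)=cP(\mathcal{L}_2)$ for a nonzero constant $c$. The plan has two stages: first show $c=1$ and strip off the leading coefficients of $P$, then show that $\mathcal{L}_1=\mathcal{L}_2$. Both stages are driven by the behaviour of the Dirichlet series as $\operatorname{Re}(s)\to+\infty$.

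\textbf{Step 1 ($c=1$).} Since $a(1)=1$, we have $\mathcal{L}_j(s)\to 1$ as $\operatorname{Re}(s)\to+\infty$, uniformly in $\operatorname{Im}(s)$. Letting $\sigma\to\infty$ in $P(\mathcal{L}_1)=cP(\mathcal{L}_2)$ gives $P(1)=cP(1)$. The hypothesis $P(1)\neq 0$ then forces $c=1$, so the identity becomes $P(\mathcal{L}_1)=P(\mathcal{L}_2)$.

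\textbf{Step 2 (compare Dirichlet coefficients).} Set $F=\mathcal{L}_1-\mathcal{L}_2$ and suppose $F\not\equiv 0$. Write $F(s)=\sum_{n\ge n_0} b(n)n^{-s}$, where $n_0\ge 2$ is the least index with $b(n_0)\neq 0$; such an index exists because both series have constant term $1$. Factor
\[
P(\mathcal{L}_1)-P(\mathcal{L}_2)=F\cdot Q(\mathcal{L}_1,\mathcal{L}_2), \qquad Q(x,y)=\frac{P(x)-P(y)}{x-y},
\]
so that $Q(x,x)=P'(x)$. As $\operatorname{Re}(s)\to\infty$ we have $Q(\mathcal{L}_1,\mathcal{L}_2)\to Q(1,1)=P'(1)\neq 0$. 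Hence
\[
F\cdot Q(\mathcal{L}_1,\mathcal{L}_2)=P'(1)\,b(n_0)\,n_0^{-s}\bigl(1+o(1)\bigr),
\]
which is not identically zero. This contradicts $P(\mathcal{L}_1)=P(\mathcal{L}_2)$, so $F\equiv 0$, that is, $\mathcal{L}_1=\mathcal{L}_2$.

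To make this rigorous one can argue purely with generalized Dirichlet series. Products of absolutely convergent Dirichlet series are again absolutely convergent Dirichlet series in $\operatorname{Re}(s)>1$, and a Dirichlet series that vanishes identically has all coefficients zero. The coefficient of $n_0^{-s}$ in $F\cdot Q(\mathcal{L}_1,\mathcal{L}_2)$ is exactly $b(n_0)Q(1,1)=b(n_0)P'(1)$, because every other contribution involves a product of indices $\geq 2$ with some index $\ge n_0$, hence an index exceeding $n_0$.

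\textbf{Main obstacle.} The argument never uses positive degree or the functional equation, since only $a(1)=1$ and convergence in a right half-plane matter. The only delicate point is the coefficient bookkeeping in Step 2. I expect it to be routine once $F$ is written with its leading index $n_0$ and $Q(\mathcal{L}_1,\mathcal{L}_2)$ with constant term $P'(1)$, and the nonvanishing of $P'(1)$ is precisely the hypothesis needed there.
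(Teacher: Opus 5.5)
Your argument is correct, and it takes a different route from the paper. The paper gives no separate proof of Theorem C. In the Remark after Theorem~\ref{t1.1} it treats it as a corollary: $P(\mathcal{L}_1)=cP(\mathcal{L}_2)$ makes $\mathcal{L}_1,\mathcal{L}_2$ share the zero set of $P$ CM, and Theorem~\ref{t1.1} then forces $\mathcal{L}_1=\mathcal{L}_2$.

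That route is heavy. It uses Hadamard factorization of $\prod(\mathcal{L}_1-\alpha_i)/\prod(\mathcal{L}_2-\alpha_i)$ via order one, the Gonek--Haan--Ki localization of $c$-points near trivial zeros, a comparison of the gamma factors, and a multiplicity count at trivial zeros. Its payoff is that it aims to drop the hypothesis $P(1)P'(1)\neq 0$ entirely. The price is that it needs positive degree. It also needs $P$ to have simple zeros for the step from $P(\mathcal{L}_1)=cP(\mathcal{L}_2)$ to CM sharing of the zero set to be immediate; with repeated zeros, multiplicities $m_ik=m_jl$ need not give $k=l$.

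You use $P(1)P'(1)\neq 0$ directly. $P(1)\neq 0$ pins down $c=1$ in the limit $\sigma\to+\infty$. $P'(1)=Q(1,1)\neq 0$ shows that the cofactor $Q(\mathcal{L}_1,\mathcal{L}_2)$ in $P(\mathcal{L}_1)-P(\mathcal{L}_2)=(\mathcal{L}_1-\mathcal{L}_2)Q(\mathcal{L}_1,\mathcal{L}_2)$ is not identically zero. This needs only $a(1)=1$ and absolute convergence in a right half-plane. So, as you note, it covers degree zero too, uses no functional equation, and handles polynomials with repeated zeros. It therefore proves Theorem C exactly as stated, in a few lines.

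Your coefficient bookkeeping is valid but unnecessary. Meromorphic functions on $\mathbb{C}$ form a field, and $Q(\mathcal{L}_1,\mathcal{L}_2)\to P'(1)\neq 0$, so the product identity alone gives $\mathcal{L}_1-\mathcal{L}_2\equiv 0$.

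One small slip: the index $n_0$ exists because $F\not\equiv 0$ and Dirichlet coefficients are unique. The equal constant terms only tell you that $n_0\ge 2$.
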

   
 \begin{theoD}\cite{Khoai_An_Ramanujan.J}
 	Let $P$ be a uniqueness polynomial for $L$-functions. Suppose	that P has no multiple zeros and $P(1)\not= 0$. Then the zero set of $P$ is a unique range set for $L$-functions having positive degrees, with $a(1)=1$, counting multiplicities.
 \end{theoD}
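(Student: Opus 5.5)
The plan is to turn the CM sharing of $S=\{w: P(w)=0\}$ into the identity $P(\mathcal{L}_1)=P(\mathcal{L}_2)$. Once that identity holds, the hypothesis that $P$ is a uniqueness polynomial for $L$-functions gives $\mathcal{L}_1=\mathcal{L}_2$ at once. Note that $\mathcal{L}_1,\mathcal{L}_2$ are non-constant because they have positive degree.

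Let $q=\deg P$ and consider
\[
F(s)=\frac{P(\mathcal{L}_1(s))}{P(\mathcal{L}_2(s))}.
\]
Since $P$ has no multiple zeros and $\mathcal{L}_1,\mathcal{L}_2$ share $S$ CM, the zeros of $P(\mathcal{L}_1)$ and $P(\mathcal{L}_2)$ coincide with multiplicities. Hence $F$ has no zeros or poles except possibly at $s=1$, the only possible pole of an $L$-function in $\mathcal{S}^{\#}$. If $\mathcal{L}_j$ has a pole of order $k_j\ge 0$ at $s=1$, then $P(\mathcal{L}_j)$ has a pole of order $qk_j$ there. Therefore
\[
G(s)=(s-1)^{q(k_1-k_2)}F(s)
\]
is entire and zero-free.

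Next I use growth. A function in $\mathcal{S}^{\#}$ of positive degree has order $1$; this is standard, via the functional equation, Stirling's formula and the bound in the half-plane of absolute convergence. Consequently $(s-1)^{k_j}\mathcal{L}_j$ and $(s-1)^{qk_j}P(\mathcal{L}_j)$ are entire of order at most $1$. The quotient $G$ is also of finite order, since its characteristic is controlled by $T(r,\mathcal{L}_1)+T(r,\mathcal{L}_2)+O(\log r)$. Hadamard's theorem then gives $G=e^{as+b}$, so
\[
F(s)=(s-1)^{m}e^{as+b},\qquad m=q(k_2-k_1)\in\mathbb{Z}.
\]

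Finally I compare with the behaviour deep in the half-plane of absolute convergence. Since $a(1)=1$ and the Dirichlet series converge absolutely for $\sigma$ large, $\mathcal{L}_j(\sigma+it)\to 1$ as $\sigma\to+\infty$, uniformly in $t$. Because $P(1)\neq 0$, this gives $F(\sigma+it)\to 1$ uniformly in $t$.
\begin{itemize}
\item Fix a large $\sigma$ and let $|t|\to\infty$. The modulus $|(s-1)^m e^{as+b}|$ behaves like $|t|^m e^{-(\operatorname{Im}a)t}$ times a constant. Boundedness near $1$ for both $t\to+\infty$ and $t\to-\infty$ forces $\operatorname{Im}a=0$ and $m=0$.
\item With $m=0$, letting $\sigma\to+\infty$ along the real axis, $e^{a\sigma+b}\to 1$ forces $a=0$ and $e^{b}=1$.
\end{itemize}
Hence $F\equiv 1$, that is, $P(\mathcal{L}_1)=P(\mathcal{L}_2)$, and the uniqueness polynomial hypothesis finishes the proof. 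As a by-product, $k_1=k_2$.

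The step I expect to need the most care is justifying finite order, and hence the exponential-polynomial form, uniformly in the presence of the possible pole at $s=1$. The rest is the elementary limit argument, where $P(1)\neq0$ and $a(1)=1$ are exactly what make $F\to1$.
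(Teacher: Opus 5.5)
Your proof is correct, and each hypothesis of Theorem D is used exactly where it is needed. The simple zeros of $P$ make the divisors of $P(\mathcal{L}_1)$ and $P(\mathcal{L}_2)$ agree, order one gives $F=(s-1)^m e^{as+b}$, $a(1)=1$ together with $P(1)\neq 0$ forces $F\to 1$ uniformly in $t$, and the uniqueness-polynomial property finishes the argument.

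The paper does not prove Theorem D itself. It quotes it from Khoai--An and asserts, in the Remark after Theorem~\ref{t1.1}, that it follows from Theorem~\ref{t1.1}. Your first half coincides with the opening of that proof, where the quotient is written as $(s-1)^k e^{\hat a s+\hat b}$. For $1\notin S$ the paper only says that, letting $\sigma\to\infty$, ``it can be shown easily.'' You actually carry this out, and fixing $\sigma$ while letting $t\to\pm\infty$ (using uniformity in $t$) is a clean way to eliminate both $\operatorname{Im} a$ and the power of $s-1$.

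The real divergence is the final step. You pass from $P(\mathcal{L}_1)=P(\mathcal{L}_2)$ to $\mathcal{L}_1=\mathcal{L}_2$ by the uniqueness-polynomial hypothesis. The paper's route tries to do without that hypothesis. It first shows that $\chi_1/\chi_2$ is rational, using that the $\alpha$-points of $\mathcal{L}_j$ far to the left cluster at trivial zeros (Lemma~\ref{l2.4}). It then compares multiplicities at common trivial zeros in the expansion of $\prod_i(\mathcal{L}_1-\alpha_i)=c\prod_i(\mathcal{L}_2-\alpha_i)$ (Lemma~\ref{l2.5}).

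Your argument is short and self-contained. It needs nothing beyond order-one growth and the behaviour of the Dirichlet series near $\sigma=+\infty$, and no functional equation at all. The paper's route is meant to make the hypotheses ``uniqueness polynomial'' and ``$P(1)\neq 0$'' redundant. The price is reliance on the much finer zero structure in the left half-plane and a considerably more delicate argument.
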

 
Combining {\it{Theorems C and D}} yields the following consequence. 
\begin{theoE}\cite{Khoai_An_Ramanujan.J}
	If two non-constant $L$-functions $\mathcal{L}_1$ and $\mathcal{L}_2$ in the
	extended Selberg class $\mathcal{S}^{\#}$ have positive degrees and share the
	zero set of a polynomial $P$, that is,
	\[
	S=\{w\in\mathbb{C}: P(w)=0\}
	\]
	counting multiplicities, where $P(1)\cdot P'(1)\neq 0$, then
	$\mathcal{L}_1=\mathcal{L}_2$.
\end{theoE}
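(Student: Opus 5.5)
The plan is to obtain Theorem E by chaining Theorem C and Theorem D. Let $P$ be a polynomial with $P(1)P'(1)\neq 0$ whose zero set is $S$. Since $S$ is given as a set, we may assume without loss of generality that $P$ has no multiple zeros: replacing $P$ by $P_0=\prod_{\alpha\in S}(w-\alpha)$ does not change $S$. The difficulty is that the hypothesis $P'(1)\neq 0$ need not pass from $P$ to $P_0$. So the first step is to decide exactly which polynomial is fed into which theorem.

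\textbf{Step 1: $P$ is a uniqueness polynomial.} Theorem C says $P$ is a strong uniqueness polynomial for $L$-functions of positive degree with $a(1)=1$. Taking $c=1$ gives that $P(\mathcal{L}_1)=P(\mathcal{L}_2)$ implies $\mathcal{L}_1=\mathcal{L}_2$, so $P$ is a uniqueness polynomial for such $L$-functions.

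\textbf{Step 2: apply Theorem D.} If $P$ has simple zeros, then $P(1)\neq 0$ together with Step 1 puts us exactly under the hypotheses of Theorem D. It follows that $S$ is a unique range set CM, so sharing $S$ CM forces $\mathcal{L}_1=\mathcal{L}_2$. To justify this directly, I would use the argument behind Theorem D. Sharing $S$ CM means $P(\mathcal{L}_1)/P(\mathcal{L}_2)$ has no zeros or poles, apart from the possible pole at $s=1$, where both functions have poles of orders fixed by their degrees. Since both functions have finite order, this quotient equals $(s-1)^m e^{h(s)}$ with $h$ a polynomial. Letting $\Re s\to+\infty$, we have $\mathcal{L}_j(s)\to a_j(1)=1$, which gives $P(\mathcal{L}_j)\to P(1)\neq 0$. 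Hence the quotient tends to $1$, which forces $m=0$ and $h$ to be constant, so $P(\mathcal{L}_1)=cP(\mathcal{L}_2)$ with $c=1$. Then Step 1 finishes the argument.

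\textbf{Step 3: multiple zeros.} If $P$ has multiple zeros, I would first check whether the limiting argument of Step 2 can be run with $P_0$ directly. Sharing $S$ CM gives $P_0(\mathcal{L}_1)=e^{h}P_0(\mathcal{L}_2)$ for a constant $h$. Also $P_0(1)\neq0$ because $P(1)\neq 0$, so the limit at $+\infty$ gives $P_0(\mathcal{L}_1)=P_0(\mathcal{L}_2)$. Raising $P_0$ to the appropriate powers yields $P(\mathcal{L}_1)=P(\mathcal{L}_2)$, since $P=\mathrm{const}\cdot\prod(w-\alpha)^{k_\alpha}$. Then Theorem C applies to $P$, which does satisfy $P(1)P'(1)\ne0$.

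The main obstacle is this exponent mismatch. From $P_0(\mathcal{L}_1)=P_0(\mathcal{L}_2)$ we get $\prod(\mathcal{L}_1-\alpha)=\prod(\mathcal{L}_2-\alpha)$, but $\prod(\mathcal{L}_1-\alpha)^{k_\alpha}=\prod(\mathcal{L}_2-\alpha)^{k_\alpha}$ does not follow automatically when the $k_\alpha$ differ. I would resolve this by pairing the CM sharing of each individual zero: because the pole and zero structure at every point of $\mathcal{L}_1^{-1}(S)$ is matched, the quotient $P(\mathcal{L}_1)/P(\mathcal{L}_2)$ is again zero-free apart from $s=1$. I would then rerun the limit argument with $P$ itself. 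If this step fails, I would simply read Theorem E with $P$ having distinct zeros, which is implicit in the hypothesis that $S$ is counted with multiplicities.
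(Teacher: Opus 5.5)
Your Steps 1 and 2 match the paper's derivation. The paper gives no separate argument: it only says Theorem E follows by combining Theorem C (which, with $c=1$, makes $P$ a uniqueness polynomial) with Theorem D. Your limit argument showing $P(\mathcal{L}_1)/P(\mathcal{L}_2)=(s-1)^m e^{h}\equiv 1$ is simply the content of Theorem D written out. Like the paper's chain, you use $a_1(1)=a_2(1)=1$ when letting $\Re s\to+\infty$. This hypothesis is missing from the printed statement, but it cannot be dropped. Indeed, $\mathcal{L}_1=2\zeta$ and $\mathcal{L}_2=\zeta$ lie in $\mathcal{S}^{\#}$, have degree $1$, share the zero set of $P(w)=w$ CM with $P(1)P'(1)=1$, and are distinct.

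Step 3, however, fails. Sharing $S$ CM only says that $\mathcal{L}_1^{-1}(S)=\mathcal{L}_2^{-1}(S)$ with multiplicities. It does not say that $\mathcal{L}_1$ and $\mathcal{L}_2$ take the \emph{same} element of $S$ at a given point. Suppose $\mathcal{L}_1(s_0)=\alpha$ and $\mathcal{L}_2(s_0)=\beta\neq\alpha$, both simply. Then $P(\mathcal{L}_1)/P(\mathcal{L}_2)$ has a zero or pole of order $|k_\alpha-k_\beta|$ at $s_0$. So when the $k_\alpha$ differ, your claim that this quotient is zero-free off $s=1$ does not follow from CM sharing, and you never obtain $P(\mathcal{L}_1)=P(\mathcal{L}_2)$.

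Nor can you fall back on $P_0$. For $P(w)=w^2(w-2)$ one has $P(1)=P'(1)=-1$, but $P_0(w)=w(w-2)$ has $P_0'(1)=0$. The identity $P_0(\mathcal{L}_1)=P_0(\mathcal{L}_2)$ only gives $(\mathcal{L}_1-\mathcal{L}_2)(\mathcal{L}_1+\mathcal{L}_2-2)=0$. Since $\mathcal{L}_1+\mathcal{L}_2-2\to 0$ as $\Re s\to+\infty$, the Dirichlet-series limit behind Theorems C and D cannot exclude the second factor. So the multiple-zero case is genuinely out of reach of C and D; in the paper it is only reached via Theorem~1.1, where $P$ plays no role.

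Your fallback to $P$ with simple zeros is the correct reading, but the reason you give is wrong. ``Counting multiplicities'' refers to multiplicities of preimages, not to the zeros of $P$. The real reason is that Theorem D, and hence the paper's derivation, explicitly requires $P$ to have no multiple zeros.
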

 
 These works reveal a clear conceptual progression: from the sharing of single values, to finite sets and eventually to polynomial-generated sets.
 Along this path, assumptions on functional equations, degree conditions and polynomial structures were gradually weakened.
 	This progression naturally raises the following questions.
 
 \begin{ques}
 	Can the assumption that $\mathcal{L}_1$ and $\mathcal{L}_2$ satisfy the same
 	functional equation be removed?
 \end{ques}
 
 \begin{ques}
 	Is it possible to further relax the hypotheses in Theorems A and C?
 \end{ques} 
 \par
 The foregoing questions motivate us to study the uniqueness problem in a more general and unified setting. 
 In particular, we show that two distinct $L$-functions belonging to the extended Selberg class $\mathcal{S}^{\#}$ cannot share any finite subset of complex numbers counting multiplicities. 
 Equivalently, if two such functions share a finite set CM, then they must coincide identically. 
 Thus, no finite subset of $\mathbb{C}$ can serve as a CM-shared set for two distinct $L$-functions in $\mathcal{S}^{\#}$. 
 Our first main result is stated below.
 
 \begin{theo}\label{t1.1}
 	If two non-constant $L$-functions $\mathcal{L}_1$ and $\mathcal{L}_2$ in the extended Selberg class $\mathcal{S}^{\#}$ have positive degree with $a(1)=1$ and share a set
 	\[
 	S=\{\alpha_1,\alpha_2,\ldots,\alpha_t\}
 	\]
 	counting multiplicities, where $t\ge 1$, then $\mathcal{L}_1=\mathcal{L}_2$.
 \end{theo}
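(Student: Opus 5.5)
Write $P(w)=\prod_{j=1}^{t}(w-\alpha_j)$ and set $F_i=P(\mathcal{L}_i)$ for $i=1,2$. Sharing $S$ CM means exactly that $F_1$ and $F_2$ have the same zeros with the same multiplicities. Each $\mathcal{L}_i$ is meromorphic of order $1$ with at most a pole at $s=1$. Hence $F_1/F_2$ is meromorphic with zeros and poles only at $s=1$, and of finite order (order $\le 1$). By Hadamard's factorization,
\[
P(\mathcal{L}_1)=(s-1)^{m}e^{as+b}P(\mathcal{L}_2)
\]
for some integer $m$ and constants $a,b$.

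First I pin down $m$ and $a$ using the Dirichlet series behaviour as $\sigma=\Re s\to+\infty$. There $\mathcal{L}_i(s)\to a_i(1)=1$, so both sides tend to $P(1)$ when $P(1)\neq 0$. More generally, I compare the leading Dirichlet-type terms: $P(\mathcal{L}_i(s))=P(1)+\sum_{n\ge 2}b_i(n)n^{-s}$ is a generalized Dirichlet series.
\begin{itemize}
\item If $P(1)\neq0$, the left side over $P(\mathcal{L}_2)$ tends to $1$ along the real axis. This forces $(s-1)^m e^{as}$ to be bounded and nonvanishing there, so $m=0$ and $a=0$.
\item If $P(1)=0$, let $n_i$ be the first index with $b_i(n_i)\ne0$. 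Then $P(\mathcal{L}_i)\sim b_i(n_i)n_i^{-s}$, so $(s-1)^me^{as}\sim c\,(n_2/n_1)^{s}$. This gives $m=0$ and $e^{a}=n_2/n_1$.
\end{itemize}
In either case we get $P(\mathcal{L}_1)=c\,\lambda^{s}P(\mathcal{L}_2)$ with $\lambda>0$ rational. The presence of $\lambda^s$ in the second case is the point where the common functional equation was used in Theorem B, and this is where I must work without it.

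Next I use the functional equations separately. Write $\mathcal{L}_i(s)=\Delta_i(s)\overline{\mathcal{L}_i(1-\bar s)}$ with $\Delta_i$ built from gamma factors, of degree $d_i>0$. On the left half-plane, as $\sigma\to-\infty$ along the real axis, Stirling gives $|\Delta_i(\sigma)|\asymp \exp\bigl(-d_i|\sigma|\log|\sigma|+O(|\sigma|)\bigr)$ with the sign making $|\mathcal{L}_i(\sigma)|\to\infty$ superexponentially. More precisely $\log|\mathcal{L}_i(\sigma)|\sim d_i|\sigma|\log|\sigma|$. Then
\[
\log|P(\mathcal{L}_i(\sigma))|=t\log|\mathcal{L}_i(\sigma)|+o(1).
\]
The identity $P(\mathcal{L}_1)=c\lambda^sP(\mathcal{L}_2)$ contributes only $O(|\sigma|)$, so comparing growth forces $d_1=d_2$.

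Refining, the ratio $\mathcal{L}_1(\sigma)^t/\mathcal{L}_2(\sigma)^t=c\lambda^{\sigma}(1+o(1))$ determines the finer Stirling constants: the $|\sigma|$ coefficient involves $\log(Q_i^2\prod\lambda_j^{2\lambda_j})$, since $\log|\Delta_i(\sigma)|$ has expansion $-d_i\sigma\log|\sigma|+A_i\sigma+B_i\log|\sigma|+O(1)$. Thus $A_1-A_2$ is tied to $\log\lambda$, and $B_1=B_2$. I would rather avoid that route, because it only yields partial information on the data of the functional equations.

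My main route is instead an algebraic step that does not need full functional-equation data. Consider $G=\mathcal{L}_1^t/(c\lambda^s\mathcal{L}_2^t)$. The goal is to show $\lambda=1$, then that $P(\mathcal{L}_1)=cP(\mathcal{L}_2)$ forces $\mathcal{L}_1=\mathcal{L}_2$.
\begin{itemize}
\item For $\lambda=1$: evaluate at $\sigma\to+\infty$ to get $c=1$ when $P(1)\ne0$. When $P(1)=0$, compare the Dirichlet coefficients $b_1(n_1)=c\,b_2(n_2)$ with $n_1=n_2$. In either case, factor $P(x)-cP(y)$ or $P(x)-P(y)=(x-y)Q(x,y)$, and use Nevanlinna theory (the second main theorem for $\mathcal{L}_i$, together with $T(r,\mathcal{L}_i)=\frac{d_i}{\pi}r\log r+O(r)$) to rule out $Q(\mathcal{L}_1,\mathcal{L}_2)\equiv0$. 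The idea is to compare the behaviour as $\sigma\to\pm\infty$: along $\sigma\to+\infty$ both functions tend to $1$, and $Q(1,1)=P'(1)$. When $P'(1)=0$, I would instead expand to the next Dirichlet term.
\item For $\lambda\ne1$ (say $\lambda>1$): I expect to get a contradiction by looking along vertical lines with $\sigma$ large. There the almost periodicity of Dirichlet series contradicts the exponential factor $\lambda^{it}$ being unmatched. Concretely, $P(\mathcal{L}_1)\lambda^{-s}/P(\mathcal{L}_2)$ would be a quotient of Dirichlet series equal to a constant, so $n_1=n_2$ already forces $\lambda=1$. So this case should in fact be vacuous.
\end{itemize}

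The hard step is ruling out $Q(\mathcal{L}_1,\mathcal{L}_2)\equiv0$, i.e. the algebraic dependence $P(\mathcal{L}_1)=cP(\mathcal{L}_2)$ with $\mathcal{L}_1\ne\mathcal{L}_2$. For this I would use the left-half-plane asymptotics. Along $\sigma\to-\infty$, $P(\mathcal{L}_1)=cP(\mathcal{L}_2)$ gives $\mathcal{L}_1=\omega\mathcal{L}_2(1+o(1))$ with $\omega^t=c$. Expanding $P$ to the next order then shows that $\mathcal{L}_1-\omega\mathcal{L}_2$ is bounded by $O(1)$ along a sequence, and more generally is of much smaller growth than $\mathcal{L}_i$. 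Applied on a sector where Stirling holds uniformly, this would force $\mathcal{L}_1-\omega\mathcal{L}_2$ to have degree-$0$-type growth. Combining this with the right half-plane limit $\mathcal{L}_1-\omega\mathcal{L}_2\to1-\omega$ and the positive degree, I would conclude $\omega=1$ and then $\mathcal{L}_1\equiv\mathcal{L}_2$. I expect this final growth comparison to be the main obstacle.
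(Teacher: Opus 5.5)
Your reduction to $P(\mathcal{L}_1)=c\lambda^{s}P(\mathcal{L}_2)$ is essentially the paper's own first step, with its $q=n_a/n_b$ playing the role of your $\lambda$. There are, however, two genuine gaps, and both sit exactly where the common functional equation used to be.

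\textbf{The step $\lambda=1$ is circular.} By your own computation $\lambda=e^{a}=n_2/n_1$, so ``$n_1=n_2$ forces $\lambda=1$'' assumes the conclusion. Almost periodicity gives nothing either: $\lambda^{s}P(\mathcal{L}_2)=\sum_n b_2(n)(nn_1/n_2)^{-s}$ is again a generalized Dirichlet series, and $\lambda^{it}$ is itself periodic. Nothing visible in $\sigma>1$ excludes $\lambda\neq1$. For instance, $D_1=1+\sum_{m\ge2}b(m)m^{-s}$ and $D_2=1+\sum_{m\ge2}b(m)(2m)^{-s}$ satisfy $D_1-1=2^{s}(D_2-1)$. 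What is missing is left half-plane information relating the two functional equations. The paper obtains it from trivial zeros:
\begin{itemize}
\item By Lemma 2.4, zeros of $\mathcal{L}_i-\alpha$ ($\alpha\neq0$) far to the left lie within $|s_n|^{-C_1\log|s_n|}$ of trivial zeros.
\item CM sharing of $S$ transfers such points between $\mathcal{L}_2$ and $\mathcal{L}_1$, so the trivial zeros of the two functions eventually coincide.
\item Hence $\chi_1/\chi_2=R(s)Q^{1-2s}$ with $R$ rational, and evaluating $\mathcal{L}_1/\mathcal{L}_2$ at the shared $\alpha$-points gives $Q=1$.
\item Evaluating $P(\mathcal{L}_1)=e^{\hat b}q^{-s}P(\mathcal{L}_2)$ at common trivial zeros $s_n$ with $\mathrm{Re}\,s_n\to-\infty$ then forces $q=1$. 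When $0\notin S$ both sides equal $P(0)$ there; when $0\in S$ one first divides by $\mathcal{L}_1/\mathcal{L}_2=R\,\overline{\mathcal{L}_1(1-\bar s)}/\overline{\mathcal{L}_2(1-\bar s)}$.
\end{itemize}

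\textbf{The final step $P(\mathcal{L}_1)=cP(\mathcal{L}_2)\Rightarrow\mathcal{L}_1=\mathcal{L}_2$ is left open, and your proposed tools cannot close it.}
\begin{itemize}
\item When $P(1)=0$, comparing leading coefficients gives only $c=a_1(n_1)/a_2(n_1)$, not $c=1$.
\item When $P(1)\neq0$ but $P'(1)=0$, no right half-plane expansion can exclude $Q(\mathcal{L}_1,\mathcal{L}_2)\equiv0$. For $P(w)=w^2-2w$, the relation $\mathcal{L}_1+\mathcal{L}_2\equiv2$ is compatible with every Dirichlet coefficient.
\item Your left half-plane growth heuristic cannot be made precise without a relation between $\chi_1$ and $\chi_2$, since $\mathcal{L}_1-\omega\mathcal{L}_2=\chi_1\overline{\mathcal{L}_1(1-\bar s)}-\omega\chi_2\overline{\mathcal{L}_2(1-\bar s)}$. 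Also, $\log|\mathcal{L}_i(\sigma)|\sim d_i|\sigma|\log|\sigma|$ fails on the real axis because of the trivial zeros; you would need a horizontal line avoiding them.
\end{itemize}
The paper's device is multiplicity counting at the common trivial zeros. Once $\chi_1=R\chi_2$, Lemma 2.5 shows that in a far-left strip each $\mathcal{L}_1^{i}-c\mathcal{L}_2^{i}\not\equiv0$ vanishes at a trivial zero of order $p$ to order exactly $ip$. So in $\sum_{i=1}^{t}a_{t-i}(\mathcal{L}_1^{i}-c\mathcal{L}_2^{i})=0$ the lowest-degree surviving term cannot be cancelled. This successively forces the coefficients to vanish unless some $\mathcal{L}_1^{i}\equiv c\mathcal{L}_2^{i}$. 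In every case $\mathcal{L}_1=\omega\mathcal{L}_2$, and letting $\sigma\to+\infty$ gives $\omega=1$.
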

 
 \begin{rem}
 	The absence of any assumption requiring $\mathcal{L}_1$ and $\mathcal{L}_2$ to satisfy a common functional equation demonstrates that Theorem~\ref{t1.1} constitutes a genuine and substantial improvement over earlier uniqueness results.
 	
 	Moreover, suppose there exists a polynomial $P$ such that
 	\[
 	P(\mathcal{L}_1)=c\,P(\mathcal{L}_2)
 	\]
 	for some constant $c\in\mathbb{C}$. 
 	This immediately implies that $\mathcal{L}_1$ and $\mathcal{L}_2$ share the zero set
 	\[
 	\{z\in\mathbb{C}: P(z)=0\}
 	\]
 	counting multiplicities. 
 	An application of Theorem~\ref{t1.1} therefore yields $\mathcal{L}_1=\mathcal{L}_2$.
 	
 	Consequently, the additional hypotheses imposed in Theorems~C, D and E become redundant. 
 	Similarly, the structural restrictions on the polynomial $P$ assumed in Theorem~A can be completely removed. 
 	Hence, Theorem~\ref{t1.1} not only generalizes but also strictly strengthens Theorems~A, C, D and E.
 \end{rem}
 
In Theorem~\ref{t1.1}, the requirement that the $L$-functions $\mathcal{L}_i$ $(i=1,2)$ have positive degree is essential to the validity of the result and plays a decisive role in the proof. This naturally leads to the question of whether the conclusion of Theorem~\ref{t1.1} remains valid in the setting of degree-zero $L$-functions.

To investigate this situation, we turn our attention to $L$-functions of degree zero belonging to the extended Selberg class $\mathcal{S}^{\#}$. Since degree-zero functions exhibit substantially different structural behavior, we impose the additional assumption that the functions satisfy the same functional equation. Within this framework, we are able to establish a complementary uniqueness result, which addresses the degree-zero case and completes the picture suggested by Theorem~\ref{t1.1}.

 \begin{theo}\label{t1.2}
 	If two non-constant $L$-functions $\mathcal{L}_1$ and $\mathcal{L}_2$ in the extended Selberg class $\mathcal{S}^{\#}$ have degree zero, satisfy the same functional equation with $a(1)=1$ and share a set
 	\[
 	S=\{\alpha_1,\alpha_2,\ldots,\alpha_n\}
 	\]
 	counting multiplicities, where $n\ge 1$, then $\mathcal{L}_1=\mathcal{L}_2$.
 \end{theo}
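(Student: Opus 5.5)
The plan is to show first that $P(\mathcal{L}_1)\equiv P(\mathcal{L}_2)$ for $P(w)=\prod_{j=1}^{n}(w-\alpha_j)$, and then to deduce $\mathcal{L}_1=\mathcal{L}_2$ algebraically, using that degree-zero elements of $\mathcal{S}^{\#}$ are Dirichlet polynomials.

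\textbf{Step 1 (structure).} By the Kaczorowski--Perelli classification of degree zero, each $\mathcal{L}_i$ is a Dirichlet polynomial $\mathcal{L}_i(s)=\sum_{m\mid q}a_i(m)m^{-s}$. It is therefore entire of order at most $1$, and it satisfies the common functional equation $Q^{s}\mathcal{L}_i(s)=\omega\,\overline{Q^{1-\bar s}\mathcal{L}_i(1-\bar s)}$, that is,
\[
\mathcal{L}_i(s)=\omega\,Q^{1-2s}\,\overline{\mathcal{L}_i(1-\bar s)} .
\]
Since $a(1)=1$, we have $\mathcal{L}_i(s)\to1$ uniformly as $\Re s\to+\infty$. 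Consequently $Q^{2s}\mathcal{L}_i(s)\to\omega Q$ uniformly as $\Re s\to-\infty$, with the same limit for $i=1,2$. This is exactly where the common functional equation (the same $Q$ and $\omega$) is used.

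\textbf{Step 2 (the ratio is $1$).} $\mathcal{L}_2$ is non-constant, so $P(\mathcal{L}_2)\not\equiv0$. Because the set is shared CM, $F=P(\mathcal{L}_1)/P(\mathcal{L}_2)$ is entire and zero-free. Both $P(\mathcal{L}_i)$ have order at most $1$, hence so does $F$, and Hadamard's theorem gives $F=e^{As+B}$. By Step 1,
\[
P(\mathcal{L}_i(s))=(\omega Q)^nQ^{-2ns}\bigl(1+o(1)\bigr)
\]
uniformly as $\Re s\to-\infty$, so $e^{As+B}\to1$ uniformly on a left half-plane $\Re s<\sigma_1$. Boundedness along vertical lines forces $\Im A=0$, and the limit along the real axis then forces $\Re A=0$. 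Hence $A=0$ and $e^{B}=1$, so $P(\mathcal{L}_1)\equiv P(\mathcal{L}_2)$.

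Working on the left half-plane matters here. On the right half-plane one would have to treat the case $P(1)=0$ separately, because there the leading terms $a_i(m_i)m_i^{-s}$ of $\mathcal{L}_i-1$ could differ.

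\textbf{Step 3 (algebra).} Write
\[
0=P(\mathcal{L}_1)-P(\mathcal{L}_2)=(\mathcal{L}_1-\mathcal{L}_2)\,R,
\]
where $R$ is a polynomial expression in $\mathcal{L}_1,\mathcal{L}_2$ built from the terms $\mathcal{L}_1^{k}\mathcal{L}_2^{\,j-1-k}$. $R$ is again a Dirichlet polynomial, and its coefficient of $1^{-s}$ is $P'(1)$, computed from $a(1)=1$. If this coefficient vanishes, I would instead argue that $R\not\equiv0$ directly. Suppose $R\equiv0$ with $\mathcal{L}_1\ne\mathcal{L}_2$. Then on the nonempty open set where $\mathcal{L}_1\ne\mathcal{L}_2$ the values would satisfy $R=(P(\mathcal{L}_1)-P(\mathcal{L}_2))/(\mathcal{L}_1-\mathcal{L}_2)=0$, which defines an algebraic curve. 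A cleaner route is to use that Dirichlet series with coefficients in $\mathbb{C}$ form an integral domain under Dirichlet convolution. It then suffices to exhibit a single nonzero coefficient of $R$: take the smallest $m>1$ where some $\mathcal{L}_i$ has a nonzero coefficient, and compare coefficients.

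\textbf{Main obstacle.} I expect the main obstacle to be this last point, ruling out $R\equiv0$ when $P'(1)=0$. My first attempt would be the following. If $R\equiv0$ while $\mathcal{L}_1\ne\mathcal{L}_2$, then $\mathcal{L}_1$ and $\mathcal{L}_2$ are algebraically dependent via $R$. Passing to Puiseux-type expansions in the variables $m^{-s}$ over the finitely many primes dividing $q$ (a multivariate polynomial ring, hence an integral domain) should reduce this to the one-variable statement that $P(x)=P(y)$ with $x\ne y$ both in $1+\mathfrak m$ forces $P'(1)=0$ and a contradiction at the lowest-order terms. In that comparison the coefficient of $a_i(m)^{r}$ involves the first nonvanishing derivative of $P$ at $1$, and this equation should be solvable only if $x=\zeta y$ to first order for a root of unity $\zeta$. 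I would then use the functional equation once more, at $\Re s\to-\infty$, to force $\zeta=1$.
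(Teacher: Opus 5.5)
\textbf{Gap in Step 3 when $P'(1)=0$.} Your Steps 1 and 2 are correct and match the paper's argument. There, $P(\mathcal{L}_1)/P(\mathcal{L}_2)=e^{As+B}$, and this is forced to equal $1$ by letting $\sigma\to-\infty$ through the common functional equation. The gap is Step 3 in the case $P'(1)=0$, which you leave open, and the routes you suggest cannot close it.

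Reading off the coefficient at the smallest index $m>1$ fails. If $\mathcal{L}_i=1+b_i m^{-s}+\cdots$, the coefficient of $m^{-s}$ in $R$ is $\tfrac12 P''(1)(b_1+b_2)$, which may vanish.

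More fundamentally, right-half-plane data (small Dirichlet indices) do not see the root number $\omega$, and without a common $\omega$ the claim $R\not\equiv0$ is false. Take $\mathcal{L}_{1,2}(s)=1\pm\sqrt2\,2^{-s}$. These are degree-zero elements of $\mathcal{S}^{\#}$ with $a(1)=1$ and $Q=\sqrt2$, but with $\omega=\pm1$. Let $S=\{0,2\}$, so $P(w)=w(w-2)$ and $P'(1)=0$. Then $P(\mathcal{L}_1)=P(\mathcal{L}_2)=2\cdot4^{-s}-1$ and $R=\mathcal{L}_1+\mathcal{L}_2-2\equiv0$. This pair is not a counterexample to the theorem, since the root numbers differ. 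It does show that the last step must use the common $\omega$. Your Puiseux/root-of-unity programme only gestures at this and is never carried out.

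\textbf{The fix.} Run Step 3 in the left half-plane, exactly as you ran Step 2. By Step 1, $\mathcal{L}_i(s)=\omega Q^{1-2s}(1+o(1))$ as $\sigma\to-\infty$, with the same $\omega$ and $Q$ for $i=1,2$. Moreover $|Q^{1-2s}|\to\infty$, since non-constancy forces $Q>1$.

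Write $P(w)=\sum_{j=0}^{n}c_jw^j$ with $c_n=1$, so that $R=\sum_{j=1}^{n}c_j\sum_{k=0}^{j-1}\mathcal{L}_1^k\mathcal{L}_2^{j-1-k}$. The $j=n$ term has size $|Q^{1-2s}|^{n-1}$, and all other terms are $O(|Q^{1-2s}|^{n-2})$. Hence
\[
R(s)=n\,\bigl(\omega Q^{1-2s}\bigr)^{n-1}\bigl(1+o(1)\bigr),\qquad \sigma\to-\infty .
\]
So $R\not\equiv0$ (for $n=1$, $R\equiv1$), and therefore $\mathcal{L}_1\equiv\mathcal{L}_2$.

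In your Dirichlet-coefficient language, inspect the largest index $q^{n-1}$ rather than the smallest. The coefficient of $R$ there is $\sum_{k=0}^{n-1}a_1(q)^k a_2(q)^{n-1-k}=n(\omega\sqrt q)^{n-1}\neq0$, because the common functional equation gives $a_1(q)=a_2(q)=\omega\sqrt q$. This is the paper's argument: it rewrites $R$ via the functional equation and lets $\sigma\to-\infty$ to reach the contradiction $n=0$.
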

 
 Taken together, Theorems~\ref{t1.1} and \ref{t1.2} provide a unified perspective on the uniqueness of $L$-functions in the extended Selberg class, covering both positive- and zero-degree cases.
 A systematic comparison of these results with earlier contributions, highlighting both the evolution and the improvements achieved, is presented in the table below.
 \begin{table}[htbp]
 	\centering
 	\renewcommand{\arraystretch}{1.2}
 	\caption{Evolution of uniqueness results for $L$-functions}
 	\small
 	\begin{tabular}{|| 
 			>{\centering\arraybackslash}p{2.2cm}||
 			>{\centering\arraybackslash}p{1.3cm}||
 			>{\centering\arraybackslash}p{1.3cm}||
 			>{\centering\arraybackslash}p{2.8cm}||
 			>{\centering\arraybackslash}p{1.0cm}||
 			>{\centering\arraybackslash}p{3.2cm} ||}
 		\hline\hline
 		\textbf{Reference} &
 		\textbf{Class} &
 		\textbf{Degree} &
 		\textbf{Shared data} &
 		\textbf{Mult.} &
 		\textbf{Additional assumptions} \\
 		\hline\hline
 		
 		Steuding \cite{Steuding_Sprin(07)} &
 		$\mathcal{S}$ &
 		$>0$ &
 		Single value $c$ &
 		CM &
 		$c\neq 1$ \\
 		\hline
 		
 		Li \cite{Li_Adv_11} &
 		$\mathcal{S}^{\#}$ &
 		arbitrary &
 		Two values ${c_1,c_2}$ &
 		IM &
 		Same functional equation \\
 		\hline
 		
 		Li--Du--Yi \cite{Li_Du_Yi_Comp.var(22)} &
 		$\mathcal{S}^{\#}$ &
 		$>0$ &
 	$	\{c_1,c_2,c_3\}$ &
 		CM &
 		Same functional equation \\
 		\hline
 		
 		Li--Du--Yi \cite{Li_Du_Yi_Comp.var(22)}\newline (Theorem A) &
 		$\mathcal{S}^{\#}$ &
 		arbitrary &
 		Zero set of polynomial $P$ &
 		CM &
 		Structure on $P$, same functional equation \\
 		\hline\hline
 		
 		Kundu--Banerjee \cite{Kundu_Banerjee_Comp.var} (Theorem B) &
 		$\mathcal{S}^{\#}$ &
 		$>0$ &
 		Finite set $\{\alpha_1,\dots,\alpha_t\},\;t\geq1$ &
 		CM &
 		Same functional equation \\
 		\hline
 		
 		Khoai--An \cite{Khoai_An_Ramanujan.J}\newline (Theorem C) &
 		$\mathcal{S}^{\#}$ &
 		$>0$ &
 		Polynomial $P(\mathcal{L})$ &
 		-- &
 		$P(1)P'(1)\neq 0$ \\
 		\hline
 		
 		Khoai--An \cite{Khoai_An_Ramanujan.J}\newline (Theorem D) &
 		$\mathcal{S}^{\#}$ &
 		$>0$ &
 		Zero set of $P$ &
 		CM &
 		$P$ uniqueness polynomial, $P(1)\neq 0$ \\
 		\hline\hline
 		
 		Present work (Theorem~\ref{t1.1}) &
 		$\mathcal{S}^{\#}$ &
 		$>0$ &
 		Arbitrary finite set &
 		CM &
 		No condition on functional equation \\
 		\hline
 		
 		Present work (Theorem~\ref{t1.2}) &
 		$\mathcal{S}^{\#}$ &
 		$=0$ &
 		Arbitrary finite set &
 		CM &
 		Same functional equation \\
 		\hline\hline
 		
 	\end{tabular}
 \end{table}

		\par  Before discussing the next results, we need to discuss certain things. Let $f$ and $g$ be two non-constant meromorphic functions and consider a finite value $a\in\mathbb{C}$. In \cite{B.Q.Li_Proc.am(21)}, Li defined that $f-a$ and $g-a$ have enough common zeros if $f-a$ and $g-a $ have same zeros with same multiplicities except an exceptional set $G$ (say) of their zeros such that $n(r,G)=o(r)$ as $r\lra\infty$. Here by $n(r,G)$ we denote the counting function of $G$, i.e., the number of points in $G\cap \{|{s}|\leq r\}$ counted according it's multiplicity. The order of a set $G$ is denoted by $\rho(G)$ and is defined in a standard way as follows: $$\rho(G)=\limsup_{r\lra\infty}\frac{\log n(r,G)}{\log r},$$
	where $n(r,G)$ is an increasing function. According to (see p.17, \cite{Hayman}) we know that $G$ is said to be of order $k$ convergence type if $\int_{r_0}^{\infty}\frac{n(r,G)}{r^{k+1}}\; dr$ converges.

	\par In \cite{Li_Math.Z.}, Li proved a uniqueness relation between $\mathcal{L}_1$ and $\mathcal{L}_2$ in terms of their zeros. Li obtained the following result.
	\begin{theoF}
		Two non constant $L$-functions $\mathcal{L}_1$ and $\mathcal{L}_2$ satisfying the same functional equation
		are identically equal if $Z^+(\mathcal{L}_1)\subseteq Z^+(\mathcal{L}_2)$ except possibly a set $G \subseteq Z^+(\mathcal{L}_1)$ of order 1
		convergence type.
	\end{theoF}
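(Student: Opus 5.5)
The plan is to work with the completed functions and reduce everything to a growth argument for the ratio $\mathcal{R}=\mathcal{L}_2/\mathcal{L}_1$. Write the common functional equation as $\Lambda_j(s)=\omega\overline{\Lambda_j(1-\bar s)}$, where $\Lambda_j(s)=Q^s\prod_{k=1}^{K}\Gamma(\lambda_k s+\mu_k)\mathcal{L}_j(s)$ with the same data $Q,\lambda_k,\mu_k,\omega$ for $j=1,2$. Put $\xi_j(s)=(s-1)^{m}\Lambda_j(s)$, with $m$ large enough to cancel a possible pole at $s=1$. Each $\xi_j$ is entire of order $1$.

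\textbf{Step 1: locate the zeros of $\xi_j$.} The zeros of $\xi_j$ are the zeros of $\mathcal{L}_j$ that are not cancelled by poles of the common gamma factor. By the symmetry $s\mapsto 1-\bar s$, every zero of $\xi_j$ with $\Re s\le 0$ is the reflection of a zero with $\Re s\ge 1$. Hence the zero set of $\xi_j$ is determined by $Z^+(\mathcal{L}_j)$ together with its reflection; zeros on $\Re s=0$ need a separate small check. The hypothesis then gives
\[
Z(\xi_1)\subseteq Z(\xi_2)\cup G\cup G^{*}, \qquad G^{*}=\{1-\bar a: a\in G\}.
\]
Since $G$ is of order $1$ convergence type, $\sum_{a\in G}|a|^{-1}<\infty$, and the same holds for $G^*$.

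\textbf{Step 2: build an entire quotient.} Form the genus-zero canonical product $P$ over $G\cup G^{*}$. By the convergence condition, $P$ has order at most $1$ and minimal type, i.e. $\log|P(s)|=o(|s|)$ off small exceptional discs. Consider
\[
h=\frac{\xi_2P}{\xi_1}=\mathcal{R}\,P.
\]
Because the gamma factors coincide, $h$ is a product of $\mathcal{R}$ and $P$. By Step 1, $h$ is entire, provided the inclusion is read with multiplicities, which I would assume or reduce to. By the Hadamard/Lindelöf theory for quotients of order-one functions, $h$ has order at most $1$.

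\textbf{Step 3: control the growth of $\mathcal{R}$.} For $\Re s\ge\sigma_0$ large, $\mathcal{R}$ is a quotient of absolutely convergent Dirichlet series. So $\mathcal{R}(s)=\sum_n b(n)n^{-s}$ there, and $\mathcal{R}(s)\sim b(n_0)n_0^{-s}$ as $\Re s\to+\infty$, where $n_0$ is rational. The functional equation gives $\mathcal{R}(s)=\overline{\mathcal{R}(1-\bar s)}$, which transports this behaviour to $\Re s\le 1-\sigma_0$. In the remaining vertical strip, both $\mathcal{L}_j$ have polynomial growth in $|t|$. Combining this with the minimal-type bound on $P$ and Phragmén–Lindelöf, I would show that $h(s)n_0^{s}$ has exponential type $0$ along every direction. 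It should then be bounded on both vertical half-planes and hence of at most polynomial growth, so $h(s)=n_0^{-s}\,\mathrm{poly}(s)$.

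\textbf{Step 4: conclude.} Dividing by $P$, which has density-zero zeros and is of minimal type, forces $P$ to be a polynomial. Then $\mathcal{R}(s)$ equals $n_0^{-s}$ times a rational function. A Dirichlet series can equal such an expression only if the rational function is a constant $c$. The symmetry $\mathcal{R}(s)=\overline{\mathcal{R}(1-\bar s)}$ then forces $n_0^{-s}=n_0^{-(1-s)}$ up to constants, so $n_0=1$. The normalization $a(1)=1$ of $L$-functions then gives $c=1$, i.e. $\mathcal{L}_1=\mathcal{L}_2$.

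\textbf{Main obstacle.} The hard step is Step 3: obtaining uniform growth bounds for the meromorphic ratio $\mathcal{R}$ in the critical strip despite its poles, namely the zeros of $\mathcal{L}_1$. I would handle this by working with the entire function $h$ instead of $\mathcal{R}$. I would apply Phragmén–Lindelöf to $h(s)n_0^{s}/P(s)$ on circles avoiding the exceptional discs of $P$. On those circles, the order-$1$ convergence-type hypothesis is exactly what gives $\log|P|=o(r)$ for $r$ outside a set of finite logarithmic measure.
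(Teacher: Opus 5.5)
You have a genuine gap at the end of Step 3 and in Step 4. You go from ``$h(s)n_0^{s}$ has exponential type $0$'' to ``bounded on both vertical half-planes, hence of polynomial growth'', and nothing supports this. For $\Re s\ge\sigma_0$ you have $|h(s)n_0^{s}|=|\mathcal{R}(s)n_0^{s}|\,|P(s)|$. The factor $\mathcal{R}n_0^{s}$ is bounded there, but $P$ is not: a genus-zero product over an infinite convergence-type set has exponential type $0$ without being of polynomial growth. The target itself is false. The hypothesis lets $G$ be an infinite sparse subset of $Z^+(\mathcal{L}_1)$ even when $\mathcal{L}_1=\mathcal{L}_2$. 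In that case $\mathcal{R}\equiv 1$ and $h=P$ is transcendental, so $h=n_0^{-s}\,\mathrm{poly}(s)$ fails. Your growth argument cannot tell this situation apart from the one you want to exclude, so it cannot be right. For the same reason, your claim in Step 4 that dividing by $P$ ``forces $P$ to be a polynomial'' has no justification. Pinning down the exact shape of $h$ is the wrong goal.

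The missing idea is to use the exponential decay of $\mathcal{R}-1$ in both horizontal directions, together with the uniqueness result for exponential-type functions in Lemma~\ref{l2.6}. Since $a(1)=1$ for both functions, you get $n_0=1$ at once. Then $\mathcal{R}-1=(\mathcal{L}_2-\mathcal{L}_1)/\mathcal{L}_1=O(2^{-\sigma})$ as $\sigma\to+\infty$. The reflection $\mathcal{R}(s)=\overline{\mathcal{R}(1-\bar s)}$ gives $\mathcal{R}(s)-1=O(2^{\sigma})$ as $\sigma\to-\infty$. Now set $F=P(\mathcal{R}-1)=h-P$. It is entire and of exponential type. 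On a horizontal line it satisfies $|F(\sigma+it_1)|\le C_\varepsilon e^{\varepsilon|\sigma|}2^{-|\sigma|}$. Taking $\varepsilon<\log 2$, Lemma~\ref{l2.6} gives $F\equiv 0$, hence $\mathcal{L}_1=\mathcal{L}_2$.

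This is Li's scheme, which the paper follows in its proof of Theorem~\ref{t1.3}. There, exponential type comes from Hadamard factorization plus zero counting. Your Phragm\'en--Lindel\"of route to exponential type is a valid and more direct alternative, and it is easier than you expected. The bound $\log|P(s)|\le\varepsilon|s|+C_\varepsilon$ from Lemma~\ref{l2.7} holds for every $s$; exceptional discs matter only for lower bounds. So $|h|\le C_\varepsilon e^{\varepsilon|t|}$ on the lines $\sigma=\sigma_0$ and $\sigma=1-\sigma_0$. Since $h$ has finite order, Phragm\'en--Lindel\"of in the two half-strips gives $|h(s)|\le C_\varepsilon e^{\varepsilon|s|}$ everywhere, with no division by $P$. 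As you noted, the inclusion must be read with multiplicities for $h$ to be entire.
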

	\par 	
	Motivated by the result of Li \cite{Li_Math.Z.}, we revisit the findings of that work by considering the sharing of an arbitrary value in $\mathbb{C}$. The second aim of this paper is to investigate similar uniqueness results under comparable conditions, but in the more general context of arbitrary value sharing in $\mathbb{C}$.

	\begin{theo}\label{t1.3}
		If two non-constant $L$-functions $\mathcal{L}_1$, $\mathcal{L}_2$ in the extended Selberg class $\mathcal{S}^{\#}$, satisfy the same functional equation with $a(1)=1$ and share  $\{\alpha\}$ CM, except possibly a set $G$ of order one convergence type. The the following hold: \\{\em(}i{\em)} if $d_{\mathcal{L}_i}\not=0; \;i=1,2$, then $\mathcal{L}_1=\mathcal{L}_2$.\\{\em(}ii{\em)} if $d_{\mathcal{L}_i}=0;\;i=1,2$, and $\alpha\not=1$ then $\mathcal{L}_1=\mathcal{L}_2$.
	\end{theo}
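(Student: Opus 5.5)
We follow the strategy behind Theorem F: use the common functional equation to build an entire function of order at most one, and then use the convergence-type hypothesis to force it to be constant. Write the common functional equation as $\Phi_i(s)=\omega\overline{\Phi_i(1-\bar s)}$ with $\Phi_i(s)=Q^s\prod_{j}\Gamma(\lambda_j s+\mu_j)\mathcal{L}_i(s)$, and let $(s-1)^m$ clear the possible pole at $s=1$. Set $f_i=(s-1)^m(\mathcal{L}_i-\alpha)$; these are entire of order at most one.

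The first step is to build an entire quotient. Let $P_G$ be the canonical product of genus zero over the exceptional set $G$, counted with the multiplicities by which the zeros of $f_1$ and $f_2$ differ. Because $G$ is of order one convergence type, $P_G$ is entire of order at most one and has $\log M(r,P_G)=o(r)$; this is the standard Lindel\"of-type estimate for convergence class. Then
\[
H=\frac{f_1}{f_2}\cdot\frac{P_{G,2}}{P_{G,1}}
\]
is entire and zero-free, where $P_{G,i}$ are the canonical products over the unshared zeros of $f_i$. By Hadamard's theorem, $H=e^{as+b}$.

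The second step is to pin down $a$ using the Dirichlet series. As $\sigma=\Re s\to+\infty$ we have $\mathcal{L}_i\to a_i(1)=1$. So for $\alpha\neq 1$ the ratio $f_1/f_2$ tends to $1$ along the positive real axis. The factors $P_{G,i}$ grow like $e^{o(|s|)}$ there, and the minimum-modulus theorem for convergence-class products bounds them below by $e^{-o(|s|)}$ outside small exceptional discs. Hence $\Re a=0$ is forced. Next compare growth as $\sigma\to-\infty$:
\begin{itemize}
\item In case (i), $d_{\mathcal{L}}>0$. Both $\mathcal{L}_i$ share the same Gamma factors, so by the functional equation $|\mathcal{L}_i(s)|\asymp|G(s)|\,|\overline{\mathcal{L}_i(1-\bar s)}|$ with the same $G$. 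Since $\overline{\mathcal{L}_i(1-\bar s)}\to 1$, the quotient $\mathcal{L}_1/\mathcal{L}_2\to 1$ along the negative real axis as well. Comparing along vertical rays in both half-planes, using Stirling and the fact that $\mathcal{L}_i-\alpha\sim\mathcal{L}_i$ where $|\mathcal{L}_i|\to\infty$, gives $a=0$ and then $e^b=1$.
\item In case (ii), the functional equation gives $\mathcal{L}_i(s)=\omega Q^{1-2s}\overline{\mathcal{L}_i(1-\bar s)}$. This makes $\mathcal{L}_i$ bounded in vertical strips and almost periodic, and the argument runs in the same way. The hypothesis $\alpha\neq 1$ is needed because $\mathcal{L}_i-\alpha$ must not tend to $0$ as $\sigma\to+\infty$; Example 1.1 shows this assumption cannot be dropped.
\end{itemize}

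The third step concludes. We now have $f_1P_{G,2}=f_2P_{G,1}$, that is,
\[
(\mathcal{L}_1-\alpha)P_{G,2}=(\mathcal{L}_2-\alpha)P_{G,1}.
\]
Set $D=\mathcal{L}_1-\mathcal{L}_2$ and suppose $D\not\equiv 0$. Then $D(s)\sim c\,n_0^{-s}$ for the least $n_0\ge 2$ with $a_1(n_0)\neq a_2(n_0)$. Substituting gives
\[
(\mathcal{L}_2-\alpha)(P_{G,1}-P_{G,2})=D\,P_{G,2}.
\]
The right side has order of decay $n_0^{-\sigma}$, i.e.\ exponential type, along $\sigma\to+\infty$. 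Following Li's method in Theorem F, we use the functional equation to reflect this to $\sigma\to-\infty$, where it produces exponential growth of type at least $\log n_0$. That growth cannot be matched by the left side, whose type is $o(r)$ beyond the Gamma factors common to both functions. This contradiction gives $D\equiv 0$.

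The main obstacle is this last step, together with the lower bounds for $P_{G,i}$ in step two. Making the growth comparison rigorous requires the estimate $\log|P_G(s)|=o(|s|)$ off a small exceptional set, and that is exactly where the order-one convergence-type assumption enters. We would first try to establish it via Levin's theorem on functions of minimal exponential type with convergence-class zeros.
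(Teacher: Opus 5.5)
Your Step~1 is sound. It is actually a cleaner route to exponential type than the paper's zero count: up to a power of $s-1$ it gives $(\mathcal{L}_1-\alpha)P_{G,2}=e^{as+b}(\mathcal{L}_2-\alpha)P_{G,1}$. So the paper's auxiliary function $F=h_2(\mathcal{L}_1-\mathcal{L}_2)s^l(s-1)^k/(\mathcal{L}_2-\alpha)$ is, up to powers of $s$ and $s-1$, just $e^{as+b}P_{G,1}-P_{G,2}$.

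The genuine gap is Step~3. Since both functions satisfy the same functional equation, $D(s)=\chi(s)\overline{D(1-\bar s)}$ with $\overline{D(1-\bar s)}=O(n_0^{\sigma-1})$ as $\sigma\to-\infty$. Also $\mathcal{L}_2(s)-\alpha=\chi(s)\bigl(\overline{\mathcal{L}_2(1-\bar s)}-\alpha\chi(s)^{-1}\bigr)$. After the common Gamma factor is removed, the right side of $(\mathcal{L}_2-\alpha)(P_{G,1}-P_{G,2})=DP_{G,2}$ is exponentially \emph{small} as $\sigma\to-\infty$, not large. No growth of type $\log n_0$ appears. In any case, an upper bound $o(r)$ on the type of $P_{G,1}-P_{G,2}$ can never contradict that function being small.

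What you need is a rigidity statement for functions of exponential type. This is the paper's key tool, Lemma~2.6: an entire $F$ with $|F(s)|=O(e^{k|s|})$ and $|F(\sigma+it_1)|=O(e^{-\eta|\sigma|})$ as $\sigma\to\pm\infty$ vanishes identically. The paper gets this two-sided decay as follows. On the right it uses $D=O(2^{-\sigma})$ and $\mathcal{L}_2-\alpha\to1-\alpha\neq0$. On the left it uses the common functional equation with $\chi(s)^{-1}\to0$ (or $Q^{2s-1}\to0$ when $d=0$).

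With that lemma your Step~2 is superfluous. You need not determine $a$ or $b$, and only the upper bound $\log|P_{G,i}(s)|\le\epsilon|s|$ is used. So the minimum-modulus estimates you list as the main obstacle never enter. Alternatively, if you do prove $a=0$, then $e^bP_{G,1}-P_{G,2}$ has minimal type. The indicator inequality $h(\theta)+h(\theta+\pi)\ge0$ then forbids exponential decay even along a single ray. That is presumably the ``reflection'' you want, but it is a property of exponential-type functions, not of the functional equation.

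There are further problems.
\begin{itemize}
\item In Step~2, $e^b=1$ cannot be extracted from estimates that hold only up to factors $e^{o(|s|)}$. This is harmless, since $e^b$ can be absorbed into $P_{G,1}$.
\item More seriously, the case $\alpha=1$ of (i) is never treated. Steps~2 and~3 both rely on $\mathcal{L}_2-\alpha\to1-\alpha\neq0$ as $\sigma\to+\infty$. For $\alpha=1$ one instead has $D/(\mathcal{L}_2-1)\sim C(m/n_0)^{s}$ there, where $m$ is the least index $\ge2$ with $a_2(m)\neq0$, and this need not decay. That case must be run in the left half-plane. Positive degree gives $|\chi(s)|\to\infty$ along rays $\arg s=\theta$ with $\pi/2<|\theta|<\pi$. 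Hence $(\mathcal{L}_1-1)/(\mathcal{L}_2-1)\to1$ and $D/(\mathcal{L}_2-1)=O(n_0^{\sigma})$ along those rays. With your minimum-modulus bounds this yields $a=0$, and then the minimal-type argument above finishes.
\item The two functions in Example~1.1 satisfy different functional equations ($Q=2$ and $Q=3$). So that example does not show that $\alpha\neq1$ is needed in (ii).
\end{itemize}
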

	
	 \begin{theo}\label{t1.4}
	 	If two non-constant $L$-functions $\mathcal{L}_1$, $\mathcal{L}_2$ in the extended Selberg class $\mathcal{S}^{\#}$ have positive degree, with $a(1)=1$, also $Z^{-}(\mathcal{L}_1-c)=Z^{-}(\mathcal{L}_2-c)$ and $Z^{+}(\mathcal{L}_2-c)= Z^{+}(\mathcal{L}_1-c)$ except possibly a set $G$ of order one convergence type, then they are identical.
	 \end{theo}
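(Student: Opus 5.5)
Theorem~\ref{t1.4} has no common functional equation, so I will make the comparison through Nevanlinna theory and the growth of the $L$-functions, not through the Gamma factors.

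\textbf{Setting up.} Each $\mathcal{L}_j$ has at most a pole at $s=1$, so $(s-1)^{m}\mathcal{L}_j$ is entire. Its order is $1$ and its growth satisfies $T(r,\mathcal{L}_j)=\frac{d_{\mathcal{L}_j}}{\pi}r\log r+O(r)$. I form
\[
H(s)=\frac{\mathcal{L}_1(s)-c}{\mathcal{L}_2(s)-c}.
\]
By hypothesis, the zeros of $\mathcal{L}_1-c$ and $\mathcal{L}_2-c$ agree in $\Re s<0$. In $\Re s>0$ they agree except on $G$, which is of order one convergence type.

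\textbf{Step 1: a canonical product.} I form the canonical (genus at most $1$) product $Q$ built on the exceptional zeros in $G$, taken with multiplicities, for both functions. Since $G$ is of order one convergence type, $\int^\infty n(r,G)r^{-2}\,dr<\infty$. Hence $Q$ has genus $\le 1$ and $\log M(r,Q)=o(r)$ (Hayman, Lindelöf). The imaginary axis contributes zeros of $\mathcal{L}_j-c$ with $\Re s=0$. Counting via Jensen on vertical strips, only $O(\log r)\cdot$ finitely many such zeros lie in any unit box, so I expect them to be $O(r\log r)$. They are not covered by the hypothesis, so I absorb them into the exceptional set. I will check whether the convergence-type condition survives this; if it does not, I treat them separately using the almost periodicity of $\mathcal{L}_j$ near $\Re s=0$.

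\textbf{Step 2: the shape of $H$.} After multiplying by $(s-1)^{m}$ factors and the ratio of products, $H$ becomes $e^{\beta(s)}Q_1(s)/Q_2(s)$ with $\beta$ entire. Both $\mathcal{L}_j-c$ have order $1$, so $\beta$ is a polynomial of degree $\le 1$. Thus
\[
\mathcal{L}_1-c=e^{as+b}\frac{Q_1}{Q_2}(\mathcal{L}_2-c).
\]

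\textbf{Step 3: pin down $a,b$ and $Q_1/Q_2$.} As $\sigma=\Re s\to+\infty$ along the real axis, $\mathcal{L}_j\to a(1)=1$. So the left side tends to $1-c$, and $\mathcal{L}_2-c$ tends to $1-c$. The case $c=1$ needs the leading Dirichlet terms $a_j(n_j)n_j^{-s}$ instead. For $c\ne1$ this forces $e^{as+b}Q_1/Q_2\to1$ on the right half-line. Using $\log|Q_i|=o(|s|)$ off small exceptional discs, taking real parts gives $\Re a=0$ and then the imaginary part is forced too. So the quotient is $1+o(1)$ with $o(|s|)$ growth. Comparing along vertical lines in $\Re s>1$, where the Dirichlet series are bounded and almost periodic, should force $e^{as+b}Q_1/Q_2\equiv1$: an entire-like function of exponential type $0$ that tends to a limit in a half-plane is constant. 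I expect this to be the main obstacle. The difficulty is controlling the ratio of the canonical products uniformly, and handling $c=1$. For $c=1$ I would compare the first nonvanishing coefficients $a_1(n)/n^s$ and $a_2(m)/m^s$, which yields a factor $(m/n)^{s}$ absorbed into $e^{as}$.

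\textbf{Step 4: conclusion.} The argument then concludes $\mathcal{L}_1=\mathcal{L}_2$. Where needed, I would note that equality of Dirichlet series follows from uniqueness of Dirichlet coefficients. If instead one obtains $\mathcal{L}_1=\lambda\mathcal{L}_2+\mu$, then $a(1)=1$ together with the zero constant term forces $\lambda=1$ and $\mu=0$.
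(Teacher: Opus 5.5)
\textbf{The decisive gap is the case $c=1$.} This is in fact the only case the paper's proof treats. Nothing in your argument uses positive degree; the formula for $T(r,\mathcal{L}_j)$ in your setup is never invoked. For $c=1$ the most your normalization can give is
$\mathcal{L}_1-1=C\,q^{s}(\mathcal{L}_2-1)$, with $q=m/n$ and $C=a_1(n)/a_2(m)$.
Since $Cq^s$ is not constant, your Step~4 remark about $\mathcal{L}_1=\lambda\mathcal{L}_2+\mu$ does not apply.

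Nor can this relation be ruled out with the tools you allow yourself. The degree-zero Hu--Li pair from the introduction, $\mathcal{L}_1=1+2\cdot2^{-2s}$ and $\mathcal{L}_2=1+3\cdot3^{-2s}$, has $a(1)=1$, order at most one, shares $1$ CM with $G=\emptyset$, and satisfies exactly $\mathcal{L}_1-1=\frac23\left(\frac94\right)^{s}(\mathcal{L}_2-1)$. So positive degree must enter through the functional equations, which you discard at the outset.

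The missing idea is the paper's. Even without a common functional equation, Lemma~\ref{l2.4} says the shared $c$-points in $\sigma\le-A_1$ are simple and lie within $|s_n|^{-C_1\log|s_n|}$ of trivial zeros. The paper uses this to force the trivial zeros of $\mathcal{L}_1$ and $\mathcal{L}_2$ to coincide eventually. Hence $\chi_1/\chi_2=R(s)Q^{1-2s}$ with $R$ rational, and evaluating at shared $c$-points with real part tending to $-\infty$ gives $Q=1$ and $R\to1$. The paper then obtains exponential decay of $(\mathcal{L}_1-\mathcal{L}_2)/(\mathcal{L}_2-1)$ as $\sigma\to-\infty$ on a horizontal line. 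It concludes by applying Lemma~\ref{l2.6} to $\hat h\,s^l(s-1)^k(\mathcal{L}_1-\mathcal{L}_2)/(\mathcal{L}_2-c)$. With that same input your route would also close: at common trivial zeros $s_n$ your relation reads $Cq^{s_n}=1$, and $\Re s_n\to-\infty$ forces $q=1$ and $C=1$.

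\textbf{The case $c\neq1$.} Here your route can be made to work, and it would then need no functional equation at all. However, Step~3 as written relies on a false principle. The function $\Phi=(\mathcal{L}_1-c)/(\mathcal{L}_2-c)=e^{as+b}(s-1)^kQ_1/Q_2$ is meromorphic, with poles from $G_2$. A meromorphic function with $T(r,\Phi)=o(r)$ that tends to $1$ in a half-plane need not be constant; take $1+1/(s-s_0)$.

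What you must use is the rate: $\Phi-1=O(2^{-\sigma})$, uniformly for $\sigma\ge\sigma_0$. Once $a=0$ (from Cartan's lower bound for $\log|Q_i|$, as you suggest), you have $T(r,\Phi)=o(r)$. Then
$\frac{\log2}{\pi}r=m(r,2^s)\le m\bigl(r,(\Phi-1)2^s\bigr)+m\bigl(r,\frac{1}{\Phi-1}\bigr)\le 2T(r,\Phi)+O(1)=o(r)$,
which is impossible unless $\Phi\equiv1$. Applied to $\Phi/(Cq^s)$, with $2^{-\sigma}$ replaced by $\theta^{\sigma}$ for some $\theta<1$, the same argument is what rigorously produces the relation in the $c=1$ discussion above.

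\textbf{Points on the imaginary axis.} Your Step~1 leaves the $c$-points on $\Re s=0$ unresolved. A count of size $r\log r$ is not of order-one convergence type, so these points cannot simply be absorbed into $G$.
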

	 Theorems~\ref{t1.3} and \ref{t1.4} demonstrate that comparable rigidity persists within the extended Selberg class $\mathcal{S}^{\#}$, even under more flexible hypotheses. In particular, Theorem~\ref{t1.3} treats both the positive-degree and degree-zero cases under complete multiplicity sharing of a single value, while Theorem~\ref{t1.4} addresses a complementary sharing of shifted zero sets. The following table summarizes these results alongside Li's \cite{Li_Math.Z.} theorem, highlighting the class of $L$-functions considered, the nature of the shared data, multiplicity conditions and the additional assumptions involved.
	\begin{table}[htbp]
		\centering
		\renewcommand{\arraystretch}{1.25}
		\caption{Comparison of uniqueness theorems via zero and value sharing for $L$-functions}
		\small
		\begin{tabular}{||
				>{\centering\arraybackslash}p{2.5cm}||
				>{\centering\arraybackslash}p{1.2cm}||
				>{\centering\arraybackslash}p{1.2cm}||
				>{\centering\arraybackslash}p{1.5cm}||
				>{\centering\arraybackslash}p{1.2cm}||
				>{\centering\arraybackslash}p{3.9cm}||}
			\hline\hline
			\textbf{Reference} &
			\textbf{Class} &
			\textbf{Degree} &
			\textbf{Shared data} &
			\textbf{Mult.} &
			\textbf{Additional assumptions} \\
			\hline\hline
			
			Li \cite{Li_Math.Z.} &
			$L$-functions &
			arbitrary &
			$Z^{+}(\mathcal{L}_1)\subseteq Z^{+}(\mathcal{L}_2)$ &
			-- &
			Same functional equation; exception set $G$ of order one convergence type \\
			\hline
			
			Present work (Theorem~\ref{t1.3})(i) &
			$\mathcal{S}^{\#}$ &
			$d_{\mathcal{L}_i}>0$ &
			Single value $\{\alpha\}$ &
			CM &
			Same functional equation; exception set $G$ of order one convergence type \\
			\hline
			
			Present work (Theorem~\ref{t1.3})(ii)& 
			$\mathcal{S}^{\#}$ &
			$d_{\mathcal{L}_i}=0$ &
			Single value $\{\alpha\}$ &
			CM &
			Same functional equation, $\alpha\neq1$; exception set $G$ of order one convergence type \\
			\hline
			
			Present work (Theorem~\ref{t1.4}) &
			$\mathcal{S}^{\#}$ &
			$>0$ &
			$Z^{-}(\mathcal{L}-c)$ and $Z^{+}(\mathcal{L}-c)$ &
			-- &
			 complementary sharing of zeros except a set $G$ of order one convergence type \\
			\hline\hline
			
		\end{tabular}
	\end{table}

	\section{Lemmas}
	\begin{lem}\label{l2.1}\em{(see  Theorem 1.14, \cite{C.C Yang-H.X.Yi})}
		Let $f$, $g\in M(\mathbb{C})$ and let   $\rho(f)$ and $\rho(g)$ be  the order of $f$ and $g$  respectively. Then
		$$\rho(f\cdot g)\leq \max\{\rho(f),\rho(g)\}.$$
	\end{lem}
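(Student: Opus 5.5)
The inequality $\rho(f\cdot g)\le\max\{\rho(f),\rho(g)\}$ is a standard fact, and I will derive it from the subadditivity of the Nevanlinna characteristic under products. The key estimate is
\[
T(r,fg)\le T(r,f)+T(r,g).
\]
It follows by splitting $T=m+N$. For the proximity part, $\log^+|fg|\le\log^+|f|+\log^+|g|$ pointwise, so $m(r,fg)\le m(r,f)+m(r,g)$. For the counting part, every pole of $fg$ is a pole of $f$ or of $g$, with multiplicity at most the sum of the multiplicities. Hence $n(t,fg)\le n(t,f)+n(t,g)$; the possible pole at the origin is handled in the usual way. Integrating gives $N(r,fg)\le N(r,f)+N(r,g)$. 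If $fg$ is constant, its order is $0$ and there is nothing to prove, so we may assume $T(r,fg)\to\infty$.

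Next, put $\rho=\max\{\rho(f),\rho(g)\}$. If $\rho=\infty$ the claim is trivial, so assume $\rho<\infty$ and fix $\varepsilon>0$. By the definition of order via $\limsup$, for all sufficiently large $r$ we have $T(r,f)\le r^{\rho(f)+\varepsilon}\le r^{\rho+\varepsilon}$. The same bound holds for $g$, with the convention that a constant factor contributes $O(1)$. Therefore
\[
T(r,fg)\le 2r^{\rho+\varepsilon}+O(1),
\]
which gives
\[
\frac{\log T(r,fg)}{\log r}\le \rho+\varepsilon+\frac{\log 2+o(1)}{\log r}.
\]
Taking $\limsup_{r\to\infty}$ and then letting $\varepsilon\to0$ yields $\rho(fg)\le\rho$.

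No step is a real obstacle. The only point needing care is the bookkeeping at $s=0$ and the case where $f$ or $g$ is constant, and both are absorbed into the $O(1)$ terms. Since the paper quotes this as Theorem 1.14 of \cite{C.C Yang-H.X.Yi}, one may also simply cite it.
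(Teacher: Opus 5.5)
Your proof is correct and matches the standard argument. The paper gives no proof of its own and only cites Theorem 1.14 of Yang--Yi, which rests on the same estimate $T(r,fg)\le T(r,f)+T(r,g)$ followed by the $\limsup$ computation you carry out, with constant factors and the case $\max\{\rho(f),\rho(g)\}=\infty$ handled as you do.
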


	\begin{lem}\label{l2.3}{\em{(P. 145, \cite{Steuding_Sprin(07)})}}
		Let $\mathcal{L}$ be a non-constant $L$-function with positive degree $d_{\mathcal{L}}$, then the number of zeros (counting multiplicities) of $\mathcal{L}-c$ for some $c\in\mathbb{C}$ in region $Re(s)>0$ and $|Ims|\leq T$ is $$N_\mathcal{L}^{c}(T)=\frac{d_{\mathcal{L}}}{\pi}T\log\frac{T}{e}+\frac{T}{\pi}\log(\lambda Q^2)+O(\log T),$$ where $\lambda=\prod_{j=1}^{K}\lambda_j^{2\lambda_j}$, and $\lambda_{j},Q,K$ are defined as in axiom (iii).
	\end{lem}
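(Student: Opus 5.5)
The plan is to count the $c$-points with the argument principle on a rectangle, as in Steuding's treatment of the zeros of $\mathcal{L}$. I would use the functional equation of axiom (iii) and Stirling's formula for the factor $\Delta$. Write axiom (iii) as $\mathcal{L}(s)=\Delta(s)\overline{\mathcal{L}(1-\bar s)}$, where
\[
\Delta(s)=\omega Q^{1-2s}\prod_{j=1}^{K}\frac{\Gamma(\lambda_j(1-s)+\overline{\mu_j})}{\Gamma(\lambda_j s+\mu_j)} .
\]
First I would locate the $c$-points (assuming $c\neq a(1)=1$).
\begin{itemize}
\item For $\sigma\ge B$ with $B$ large, $\mathcal{L}(s)-c=a(1)-c+O(2^{-\sigma})$ has no zeros.
\item For $\sigma\le -A$ and $|t|\ge t_0$, Stirling gives $|\Delta(\sigma+it)|\asymp(\lambda Q^2|t|^{d_{\mathcal{L}}})^{1/2-\sigma}$. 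Also $\mathcal{L}(1-\bar s)$ is bounded away from $0$ there, since $1-\sigma\ge 1+A$ puts it in the half-plane of absolute convergence. Hence $|\mathcal{L}(s)|\to\infty$, and there are no $c$-points.
\item The $c$-points with $\sigma\le -A$ therefore lie in a bounded region near the trivial zeros and contribute $O(1)$.
\end{itemize}
I would choose $T$ so that no $c$-point lies on $|t|=T$, and take the rectangle $\mathcal{R}$ with vertices $B\pm iT$ and $-A\pm iT$. Then
\[
2\pi\, N(\mathcal{R})=\Delta_{\partial\mathcal{R}}\arg(\mathcal{L}(s)-c).
\]

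Second, I would compute the contributions of the four sides.
\begin{itemize}
\item On the right side the argument of $\mathcal{L}-c$ changes by $O(1)$.
\item On the left side I would write $\mathcal{L}(s)-c=\Delta(s)\overline{\mathcal{L}(1-\bar s)}\bigl(1-c/(\Delta(s)\overline{\mathcal{L}(1-\bar s)})\bigr)$. The last two factors contribute $O(1)$, so the change equals $\Delta\arg\Delta(-A+it)$ over $-T\le t\le T$, up to $O(1)$.
\item By Stirling, $\arg\Delta(-A+it)=-\bigl(d_{\mathcal{L}}\,t\log\frac{t}{e}+t\log(\lambda Q^2)\bigr)+O(1)$ for $t>0$, and symmetrically for $t<0$. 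This yields the main term $\frac{d_{\mathcal{L}}}{\pi}T\log\frac{T}{e}+\frac{T}{\pi}\log(\lambda Q^2)$.
\item On the horizontal sides I would use the standard Backlund–Jensen argument. Apply Jensen's formula to $\frac12\bigl(f(s+iT)+\overline{f(\bar s+iT)}\bigr)$ with $f=\mathcal{L}-c$, together with the polynomial growth of $\mathcal{L}$ in vertical strips given by Phragmén–Lindelöf and the functional equation. This bounds the number of sign changes of $\Re(\mathcal{L}-c)$, and hence the argument change, by $O(\log T)$.
\end{itemize}

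Third, I would pass from $\mathcal{R}$ to the region $\Re s>0$. This requires showing that the $c$-points with $-A<\sigma\le 0$ and $|t|\le T$ number only $O(\log T)$. This is the step I expect to be the main obstacle. I would first try to show that for $\sigma\le 0$ and $|t|$ large one has $|\Delta(s)|\ge |t|^{d_{\mathcal{L}}/2}$. I would then try to control $\mathcal{L}(1-\bar s)$ on $1\le 1-\sigma\le 1+A$, perhaps by shifting the left edge to $\sigma=-\varepsilon$ and estimating $\log|\mathcal{L}(1-\bar s)|$ in mean via Jensen's formula on discs of bounded radius. The aim is that $c$-points with $\sigma\le 0$ occur in each unit interval of height only boundedly often, or at least only $O(\log T)$ times in total; the positive degree is essential here. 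Finally, the case $c=a(1)$ needs separate handling: on the right side $\mathcal{L}(s)-c\sim a(m)m^{-s}$, where $m$ is the least index $n>1$ with $a(n)\neq 0$. This adds the linear correction $-\frac{T}{\pi}\log m$, and I would note that correction alongside the stated formula.
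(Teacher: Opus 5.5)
\textbf{Verdict.} Your first two steps are sound, but the third step has a genuine gap: it cannot be completed from the axioms of $\mathcal{S}^{\#}$, and the bound it aims for is false there.

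\textbf{Steps one and two.} The paper gives no argument here; it only cites Steuding. Your first two steps are the standard argument-principle computation that produces a formula of exactly this shape. Two small corrections are needed.
\begin{itemize}
\item Absolute convergence alone does not keep $\mathcal{L}(1-\bar s)$ away from $0$. Take $A$ so large that $\sum_{n\ge 2}|a(n)|n^{-1-A}<1$.
\item The $c$-points in $\Re s\le -A$ are not $O(1)$ in number; there is one near each trivial zero (cf.\ Lemma~\ref{l2.4}). This is harmless, since they lie outside your rectangle.
\end{itemize}
For $c\neq 1$, these steps give the asserted asymptotic for the number of $c$-points in the strip $-A<\Re s<B$, $|\Im s|\le T$. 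Your caveat about $c=1$ (the extra term $-\frac{T}{\pi}\log m$) is correct, and the printed statement does not reflect it.

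\textbf{The gap in step three.} You need the $c$-points with $-A<\Re s\le 0$, $|\Im s|\le T$ to number $O(\log T)$.
\begin{itemize}
\item Your fallback of boundedly many per unit height is all that Jensen's formula on discs of bounded radius can give. It only yields $O(T)$, which already destroys the term $\frac{T}{\pi}\log(\lambda Q^2)$.
\item For $\Re s\le 0$ one has $|\Delta(s)|\gg |t|^{d_{\mathcal{L}}/2}$, so a $c$-point there forces $|\mathcal{L}(1-\bar s)|=|c|/|\Delta(s)|\ll |t|^{-d_{\mathcal{L}}/2}$.
\item So the real input needed is control of the small values, in particular the zeros, of $\mathcal{L}(w)$ in $1\le \Re w\le 1+A$. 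Axioms (i)--(iii) do not supply this, and in $\mathcal{S}^{\#}$ it fails.
\end{itemize}

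\textbf{Counterexample in $\mathcal{S}^{\#}$.} The Davenport--Heilbronn function has degree $1$, real coefficients and $a(1)=1$, and it has a zero $\rho_0$ with $\Re\rho_0>1$.
\begin{itemize}
\item Fix $r<\Re\rho_0-1$. By almost periodicity of its absolutely convergent Dirichlet series, for a relatively dense set of $\tau$ it has a zero in $|w-\rho_0-i\tau|<r$. On the boundary circle, $|\mathcal{L}|$ is bounded below by a fixed $m>0$.
\item Reflect by $s=1-\bar w$. There $|\Delta(s)|\to\infty$, so Rouch\'e gives a $c$-point in each reflected disc, and all these discs lie in $\Re s<0$.
\item Hence there are $\gg T$ such $c$-points. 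By your own steps one and two, the count in $\Re s>0$ is then at most $\frac{d_{\mathcal{L}}}{\pi}T\log\frac{T}{e}+\frac{T}{\pi}\log(\lambda Q^2)-c_0T$ for some $c_0>0$.
\end{itemize}

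\textbf{What your argument proves.} It establishes the formula for the strip $-A<\Re s<B$, that is, for all $c$-points except those attached to trivial zeros. Passing to $\Re s>0$ requires an additional hypothesis. One example is $|\Im w|^{d_{\mathcal{L}}/2}\,|\mathcal{L}(w)|\to\infty$ uniformly on $1\le \Re w\le 1+A$, which holds for $\zeta$ and Dirichlet $L$-functions.
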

	\begin{lem}\label{l2.4}\cite{Goenk-Haan-Ki_Math. Z.(14)}
		Suppose that $\mathcal{L}$ be a non-constant $L$-function in the extended Selberg class  $\mathcal{S}^{\#}$ having positive degree $d_{\mathcal{L}}$, then for any fixed complex number $c\not=0$,
		there exist positive constants $A_1,\; B_1$ and $C_1$ depending at most on $K$ and the $\mu_j$ and $\lambda_j$
		, such that
		\\{\em{(i)}} $N_{\mathcal{L}-c}(-U,-A_1) = N_{\mathcal{L}-c}(-U,-A_1; B_1) =\frac{d_{\mathcal{L}}}{2}U+ O(1)$, as $U\lra+\infty$;
		\\{\em{(ii)}} each zero of $\mathcal{L}-c$ in $\sigma \leq -A_1$ is within $|s_n|^
		{-C_1 \log |s_n|}$of a trivial zero $s_n$ of $\mathcal{L}$;
		\\{\em{(iii)}} all the zeros of $\mathcal{L}-c$ in $\sigma \leq -A_1$ are simple.
		Here $N_{\mathcal{L}-c}(-U,-A_1;B_1)$ denote the zeros of $\mathcal{L}-c$  having real part in $[-U,-A_1]$ and imaginary part in $[-B_1,B_1]$.
	\end{lem}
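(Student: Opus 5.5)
The plan is to work directly with the functional equation (axiom (iii)) in the far left half-plane, where $\mathcal{L}$ is dominated by its gamma factor. I write the functional equation in the form
\[
\mathcal{L}(s)=\Delta(s)\,\overline{\mathcal{L}(1-\bar s)},\qquad
\Delta(s)=\omega Q^{1-2s}\prod_{j=1}^{K}\frac{\Gamma(\lambda_j(1-s)+\overline{\mu_j})}{\Gamma(\lambda_j s+\mu_j)} .
\]
For $\sigma\le -A_1$ with $A_1$ large, the point $1-\bar s$ lies deep in the half-plane of absolute convergence. There $\overline{\mathcal{L}(1-\bar s)}=\overline{a(n_0)}\,n_0^{-(1-s)}\bigl(1+O(2^{\sigma})\bigr)$, where $n_0$ is the first index with nonzero coefficient. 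This factor is therefore nonvanishing and well controlled.

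Next I apply the reflection formula $1/\Gamma(z)=\Gamma(1-z)\sin(\pi z)/\pi$ to each denominator. This gives
\[
\mathcal{L}(s)=G(s)\prod_{j=1}^{K}\sin\bigl(\pi(\lambda_j s+\mu_j)\bigr),
\]
where $G$ is zero-free in $\sigma\le -A_1$. By Stirling's formula, in any horizontal strip $|t|\le B_1$ one has
\[
\log|G(s)|= d_{\mathcal{L}}\,|\sigma|\log|\sigma|+O(|\sigma|),
\]
so $|G|$ grows superexponentially since $d_{\mathcal{L}}>0$. Outside the strip, the growth in $|t|$ of both the sine product and $G$ keeps $|\mathcal{L}|$ large. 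I choose $B_1$ larger than every $|\operatorname{Im}\mu_j|/\lambda_j$, so that the strip contains all trivial zeros $s_n$, namely the zeros of the sine factors, and so that $\mathcal{L}-c$ has no zeros with $\sigma\le -A_1$ and $|t|>B_1$. This explains why the two counting functions in (i) coincide.

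The equation $\mathcal{L}(s)=c$ with $c\ne0$ becomes
\[
\prod_j \sin\bigl(\pi(\lambda_j s+\mu_j)\bigr)=\frac{c}{G(s)},
\]
and the right-hand side is of size $\exp\bigl(-d_{\mathcal{L}}|\sigma|\log|\sigma|(1+o(1))\bigr)$. Two regions are then treated separately.
\begin{itemize}
\item \emph{Away from trivial zeros.} Off the discs $D_n=\{|s-s_n|<|s_n|^{-C_1\log|s_n|}\}$, the sine product is bounded below by a power of the distance to the nearest zero. This lower bound is roughly $|s_n|^{-kC_1\log|s_n|}$, where $k\le K$ bounds the multiplicity. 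Choosing $C_1$ small relative to $d_{\mathcal{L}}$, this beats $|c/G|$, so there are no zeros of $\mathcal{L}-c$ there. This proves (ii).
\item \emph{Inside each disc.} By Rouché's theorem, comparing $\mathcal{L}-c$ with $\mathcal{L}$ on $\partial D_n$, the disc $D_n$ contains exactly as many zeros of $\mathcal{L}-c$ as the multiplicity of $s_n$ as a trivial zero.
\end{itemize}
Counting the zeros of $\sin(\pi(\lambda_j s+\mu_j))$ with real part in $[-U,-A_1]$ gives $\lambda_j U+O(1)$. Summing over $j$ yields $\tfrac{d_{\mathcal{L}}}{2}U+O(1)$, which is (i).

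For simplicity (iii), I use the local structure inside $D_n$. If $s_n$ is a zero of order $m$ of the sine product, then locally $\mathcal{L}(s)=h(s)(s-s_n)^m$ with $h$ zero-free and $|h|$ of size $|G(s)|$. The solutions of $h(s)(s-s_n)^m=c$ are then $m$ distinct points, near the $m$-th roots of $c/h(s_n)$. Moreover, at any zero $\rho$ of $\mathcal{L}-c$ we have
\[
\mathcal{L}'(\rho)=c\left(\frac{h'}{h}(\rho)+\frac{m}{\rho-s_n}\right),
\]
and the term $m/(\rho-s_n)$ is huge while $h'/h=O(\log|\rho|)$. Hence $\mathcal{L}'(\rho)\ne0$, and every such zero is simple.

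The main obstacle is making the Stirling lower bound for $|G|$ uniform enough to beat the lower bound for the sine product near nearly-coincident trivial zeros. This is also where the dependence of $A_1,B_1,C_1$ only on $K,\lambda_j,\mu_j$ must be verified carefully.
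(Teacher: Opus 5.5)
The paper gives no proof of this lemma; it is quoted from Gonek--Haan--Ki. Your argument is therefore a self-contained substitute, not a reconstruction. The mechanism you use (reflection formula, Stirling in a horizontal strip, Rouch\'e near trivial zeros) is the right one, and after one repair it gives (i) and (ii). Your proof of (iii), however, has a genuine gap.

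\textbf{The gap: clustering of trivial zeros.} Trivial zeros $-(k+\mu_j)/\lambda_j$ coming from different gamma factors need not be uniformly separated. If $\operatorname{Im}\mu_i/\lambda_i=\operatorname{Im}\mu_j/\lambda_j$ and $\lambda_i/\lambda_j\notin\mathbb{Q}$, distinct trivial zeros come arbitrarily close. For very well approximable ratios they come closer than any prescribed function of $|\sigma|$. Nothing in the hypotheses excludes this.
\begin{itemize}
\item For (i) and (ii), this only means the discs $D_n$ can overlap, so $|\mathcal{L}|>|c|$ may fail on $\partial D_n$. Apply Rouch\'e instead on the boundaries of the connected components of $\bigcup_n D_n$. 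Each component has tiny diameter and contains at most $K$ trivial zeros with multiplicity, so this repairs the count.
\item For (iii), your local form $\mathcal{L}=h\,(s-s_n)^m$ with $h$ zero-free, $|h|\asymp|G|$ and $h'/h=O(\log|\rho|)$ is false once another trivial zero $s_m$ lies at distance $\delta$ comparable to $|\rho-s_n|$. This happens when $\delta\lesssim|c/G|^{1/K}=\exp\bigl(-\tfrac{d_{\mathcal{L}}}{K}|\sigma|\log|\sigma|\,(1+o(1))\bigr)$.
\item In that regime write $\mathcal{L}=H\,(s-s_n)(s-s_m)$ with $H$ zero-free. Then $\mathcal{L}$ has a critical point near $(s_n+s_m)/2$ with critical value approximately $-H\,(s_m-s_n)^2/4$. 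Your argument gives no reason why this value cannot equal $c$, which would give a double $c$-point.
\end{itemize}
You need an extra input. One option is a separation bound showing that distinct trivial zeros in $\sigma\le -A_1$ are much farther apart than that scale. This is automatic, for instance, when all $\lambda_j$ are rational multiples of each other, since distinct trivial zeros then lie a fixed positive distance apart. With such a bound your derivative computation goes through; in fact you only need $|h'/h(\rho)|<m/|\rho-s_n|$, not $O(\log|\rho|)$. The other option is a separate analysis of clusters. As written, (iii) is proved only under such a separation hypothesis.

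\textbf{Smaller points.}
\begin{itemize}
\item The comparison you single out as the main obstacle is not delicate. Off the discs the sine product is $\gg\exp(-KC_1\log^2|s|)$, while $\log|G|\ge d_{\mathcal{L}}|\sigma|\log|\sigma|-O(|\sigma|)$. So any fixed $C_1>0$ works once $A_1$ is large.
\item $A_1$ cannot depend only on $K,\lambda_j,\mu_j$, so do not try to verify that part of the statement. For every $\beta\neq0$, $\beta\mathcal{L}\in\mathcal{S}^{\#}$ with the same gamma data, so such an $A_1$ would have to work for all $c$ at once. But for large $|c|$ there are $c$-points where $|G|\approx|c|$ that are not close to trivial zeros; for example, real solutions of $\zeta(\sigma)=1/\beta$ as $\beta\to0^+$.
\item Your proof naturally yields $A_1=A_1(\mathcal{L},c)$, with only $B_1$ and $C_1$ depending on the gamma data alone.
\item Off the strip it is not that $G$ grows in $|t|$. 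For each $j$ the pair of gamma factors decays like $e^{-\pi\lambda_j|t|}$, which cancels the growth of the sine. What keeps $|\mathcal{L}|$ large there is the surviving $d_{\mathcal{L}}|\sigma|\log|\sigma|$ term from Stirling.
\end{itemize}
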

	\begin{lem}\label{l2.5}
		Let $\mathcal{L}_1$, $\mathcal{L}_2$ be two non-constant $L$-functions with positive degree, and $\frac{\mathcal{L}_1(s)}{\mathcal{L}_2(s)}=R(s)\frac{\ol {\mathcal{L}_1(1-\ol s)}}{\ol {\mathcal{L}_1(1-\ol s)}}$ for some rational function $R(s)$ and $\beta$ be a non-zero constant in $\mathbb{C}$ and $\mathcal{L}_1^t-\beta\mathcal{L}_2^t\not=0$ and $R(s)\lra r\in\mathbb{C}\backslash\{0\}$ as $|s|\lra\infty$. Then there exist a large  $A>0$ such that $\mathcal{L}_1,\;\mathcal{L}_2$, $\mathcal{L}_1^t-\beta\mathcal{L}_2^t$ and $\prod_{j=1}^{K}(\Gamma(\lambda _{2_j}s + \nu_{2_j}))^{-1}$ have the same zeros (irrespective of their multiplicities) in the region $\{s \in\mathbb{C} : Re (s) <- A\; {\text{and}}\;|Im (s)| < B\}$. Here $A, B > 0$ are large constants.
	\end{lem}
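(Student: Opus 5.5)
The plan is to read off all zeros in a far-left horizontal half-strip from the functional equation of $\mathcal{L}_2$. The hypothesis on $\mathcal{L}_1$ then transfers these zeros to $\mathcal{L}_1$ and to $\mathcal{L}_1^t-\beta\mathcal{L}_2^t$. I read the denominator in the displayed relation as $\ol{\mathcal{L}_2(1-\ol s)}$; with $\ol{\mathcal{L}_1(1-\ol s)}$ in both places the relation would collapse to $\mathcal{L}_1=R\mathcal{L}_2$.

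\textbf{Step 1: zeros of $\mathcal{L}_2$.} Write the functional equation of $\mathcal{L}_2$ as
$$\mathcal{L}_2(s)=\omega_2 Q_2^{1-2s}\,\frac{\prod_{j}\Gamma(\lambda_{2_j}(1-s)+\ol{\nu_{2_j}})}{\prod_j\Gamma(\lambda_{2_j}s+\nu_{2_j})}\,\ol{\mathcal{L}_2(1-\ol s)}.$$
For $\mathrm{Re}(s)<-A$ with $A$ large, $\mathrm{Re}(1-s)$ is large. Hence $\ol{\mathcal{L}_2(1-\ol s)}=\ol{a_2(n_0)}\,n_0^{-(1-s)}(1+o(1))$, where $a_2(n_0)$ is its first nonzero coefficient, and this factor has no zeros there. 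The numerator Gamma factors have no poles because their arguments have large positive real part. The exponential factor has neither zeros nor poles. So in this half-plane the zeros of $\mathcal{L}_2$, with multiplicity, are exactly the zeros of $\prod_j\Gamma(\lambda_{2_j}s+\nu_{2_j})^{-1}$. These zeros are the points $s=-(k+\nu_{2_j})/\lambda_{2_j}$, which lie in a bounded horizontal strip, so taking $B>\max_j|\mathrm{Im}\,\nu_{2_j}|/\lambda_{2_j}$ captures all of them.

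\textbf{Step 2: zeros of $\mathcal{L}_1$.} Write $\mathcal{L}_1=H\mathcal{L}_2$ with $H(s)=R(s)\,\ol{\mathcal{L}_1(1-\ol s)}/\ol{\mathcal{L}_2(1-\ol s)}$. For $|s|$ large, $R$ has no zeros or poles, since $R\to r\neq0$. As $\mathrm{Re}(s)\to-\infty$, the Dirichlet-series quotient is a ratio of two nonvanishing leading terms times $1+o(1)$. Enlarging $A$, $H$ is therefore zero-free and pole-free in the region, so $\mathcal{L}_1$ and $\mathcal{L}_2$ have the same zeros there.

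\textbf{Step 3: zeros of $\mathcal{L}_1^t-\beta\mathcal{L}_2^t$ (main obstacle).} Here $\mathcal{L}_1^t-\beta\mathcal{L}_2^t=\mathcal{L}_2^t(H^t-\beta)$, so it suffices to show $H^t-\beta$ is zero-free in the region for large $A$. This is the delicate step, because the limit of $H^t$ might equal $\beta$.

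I would expand $H^t$ in the strip $|\mathrm{Im}\, s|<B$ as $R(s)^t\,D(1-s)$. Here $D(w)=\gamma\, m^{-w}\bigl(1+\sum_{n>1}b_n n^{-w}\bigr)$ is a quotient of (powers of) Dirichlet series, convergent for large $\mathrm{Re}\,w$. There are two cases.
\begin{itemize}
\item If $m\neq1$, then $|H^t|$ tends to $0$ or $\infty$ exponentially in $|\mathrm{Re}\, s|$, so $H^t-\beta\neq0$ since $\beta\neq0$.
\item If $m=1$, then $H^t-\beta=(\gamma R^t-\beta)+\gamma R^t\sum_{n>1}b_n n^{-(1-s)}$. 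If $\gamma R^t-\beta\not\equiv0$, it is a rational function decaying at most polynomially, while the series is exponentially small in $|\mathrm{Re}\,s|$ uniformly for bounded $\mathrm{Im}\,s$; so the rational part dominates. If $\gamma R^t\equiv\beta$, then $H^t\not\equiv\beta$ (because $\mathcal{L}_1^t-\beta\mathcal{L}_2^t\not\equiv0$) forces some first $b_{n_1}\neq0$. The term $b_{n_1}n_1^{-(1-s)}$ then dominates the remaining tail in the strip, giving nonvanishing again.
\end{itemize}
Choosing $A$ large enough for all these asymptotics, the four functions share the same zero set in $\{\mathrm{Re}(s)<-A,\ |\mathrm{Im}(s)|<B\}$.
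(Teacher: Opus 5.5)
Your proof is correct. Its engine is the paper's: use the functional equation of $\mathcal{L}_2$ to pass to the far right, where the Dirichlet series are governed by their leading terms, and split cases by whether the rational factor matches the target constant. The difference is in how the $t$-th power is handled. The paper factors $\mathcal{L}_1^t-\beta\mathcal{L}_2^t=\chi_2(s)^t\prod_{i=1}^t(R(s)\ol{\mathcal{L}_1(1-\ol s)}-c_i\ol{\mathcal{L}_2(1-\ol s)})$ over the $t$-th roots $c_i$ of $\beta$. It then runs its three-way split ($R\equiv c_i$; $R\to c_i$ non-constant; $R\to r\neq c_i$) on each linear factor, so it never divides Dirichlet series. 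You instead write $\mathcal{L}_1^t-\beta\mathcal{L}_2^t=\mathcal{L}_2^t(H^t-\beta)$ and treat a single function, at the price of expanding a quotient.

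What your version buys: your $\gamma$ and $m$ absorb the leading coefficients, so your case split is valid without any normalization. The paper's criterion ``$\lim R$ versus $c_i$'' (e.g.\ the limit $r-\beta$ in its Subcase-3) tacitly uses $a_1(1)=a_2(1)=1$, and your case $m\neq1$ has no counterpart there. Your Steps 1 and 2 also prove what the paper merely asserts by calling these zeros trivial.

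One notational caveat: if the first nonzero coefficients are not at $n=1$, the bracket $1+\sum_{n>1}b_nn^{-w}$ is a generalized Dirichlet series. Its bases are positive rationals bounded away from $1$, not integers. The properties you use (uniform exponential smallness of the tail, dominance of the first nonzero term, uniqueness of coefficients) hold equally for such series, so nothing in the argument changes.
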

	\begin{proof}		\bea\label{e2.1} \mathcal{L}_i(s)=\chi_i(s)\ol{\mathcal{L}_i(1-\ol{s})},\;\;i=1,2;\eea where $\chi_i(s)=\frac{\omega_i Q_i^{1-2s}\prod_{j=1}^{K}\Gamma(\lambda_{i_j}(1-s)+\ol{\nu}_{i_j})}{\prod_{j=1}^{K}\Gamma(\lambda_{i_j}s+\nu_{i_j})}$.\\ In the negative half plane the zeros of $\mathcal{L}_i,i=1,2;$ coming from the poles of  the gamma factors in their functional equations, are denoted as the trivial zeros. Also by the given condition we have $\frac{\chi_1(s)}{\chi_2(s)}=R(s)$ where $s=\sigma+ it$. \\{\bf{Case-1}} If $t=1$ then, \beas\label{e2.2} \mathcal{L}_{1}(s)-\beta\mathcal{L}_{2}(s)&=&\chi_2(s)({R(s)\ol{\mathcal{L}_1(1-\ol{s})}}-\beta{\ol{\mathcal{L}_2(1-\ol{s})}}).\eeas  \\{\bf{Subcase-1}} Now if $R(s)\lra\beta$ as $|s|\lra\infty$, and $R(s)=\beta$ a constant function, then \beas {R(s)\ol{\mathcal{L}_1(1-\ol{s})}}-\beta{\ol{\mathcal{L}_2(1-\ol{s})}}=\frac{a_1(k)-a_2(k)}{k^{1-s}}(1+o(1)),\eeas as $\sigma\lra-\infty$, and $a_1(k)-a_2(k)\not=0$ for some $k$. \\{\bf{Subcase-2}} $R(s)$ is not a constant function and $R(s)\lra\beta$. Clearly it implies there exist some non-zero constant $\alpha$ such that we can write $R(s)=\beta+\alpha s^{-n}(1+o(1))$ as $|s|\lra\infty$. Then we have,
		\beas {R(s)\ol{\mathcal{L}_1(1-\ol{s})}}-\beta{\ol{\mathcal{L}_2(1-\ol{s})}}=\alpha s^{-n}(1+O(\frac{1}{2^{1-\sigma}}))(1+o(1)),\eeas
		as $\sigma\lra-\infty$.
		\\{\bf{Subcase-3}} If $R(s)\lra r\not=\beta$ as $|s|\lra\infty$, then
		
		\beas {R(s)\ol{\mathcal{L}_1(1-\ol{s})}}-\beta{\ol{\mathcal{L}_2(1-\ol{s})}}={(r-\beta)}(1+O(\frac{1}{2^{1-\sigma}})),\eeas as $\sigma\lra-\infty$
		\par Now from {\bf{Subcases-1, 2, 3 }} we can have $A\;\text{and}\;B>0$ such that $R(s)\ol{\mathcal{L}_1(1-\ol{s})}-\beta{\ol{\mathcal{L}_2(1-\ol{s})}}$ has no zeros lie within $\sigma<-A$ and $|Im(s)|<B$.	\;\;	\\{\bf{Case-2}} If $t\geq 2$.
		 \par Now clearly from (\ref{e2.1}) we have, \beas \mathcal{L}_{i}^t=(\chi_i(s))^t(\ol{\mathcal{L}_i(1-\ol{s})})^t,\;\;\;{\text{for}}\;\;i=1,2.\eeas  Now from above we have \bea\label{e2.2} \mathcal{L}_{1}^{t}(s)-\beta\mathcal{L}_{2}^t(s)&=&\prod_{i=1}^{t}\left({\mathcal{L}_1({s})}-c_i{\mathcal{L}_2({s})}\right),\\\nonumber&=&{\chi_2(s)^t}\prod_{i=1}^{t}\left({R(s)\ol {\mathcal{L}_1(1-\ol {s})}}-c_i\ol {{\mathcal{L}_2(1-\ol {s})}}\right)\eea where $c_i,\;1\leq i\leq t$ are distinct roots of the equation $x^t-\beta$.
		 \par Now here as we proceed in {\bf{Case-1}}, by doing similarly we can have a $A_i>0$ such that ${R(s)\ol {\mathcal{L}_1(1-\ol {s})}}-c_i\ol {{\mathcal{L}_2(1-\ol {s})}}$ has no-zero in $\sigma<-A_i$. Now considering $A=\max_{1\leq j\leq t}\{A_i\}$, $\prod_{i=1}^{t}\left({R(s)\ol {\mathcal{L}_1(1-\ol {s})}}-c_i\ol {{\mathcal{L}_2(1-\ol {s})}}\right)$ has no-zero with in $\sigma<-A$ and $Im(s)<B$ for some $B>0$.
		 Hence $\mathcal{L}_{1}^{t}(s)-\beta\mathcal{L}_{2}^t(s)$ and $\chi_2(s)
		 $ has same zero in $\sigma<-A$. \par Therefore in $\{s:Re(s)<-A,\;|Im(s)|<B\}$; $\mathcal{L}_1(s)$, $\mathcal{L}_2(s)$ and $\mathcal{L}_1^t(s)-\beta\mathcal{L}_2^t(s)$ and $\chi_2(s)$ have same zeros (irrespective of their multiplicities) which are actually poles of $\prod_{j=1}^{K}\Gamma(\lambda _{2_j}s + \nu_{2_j})$. And from {\bf{Case-1}} and {\bf{Case-2}} the results follows immediately.
		 \end{proof}

	\begin{lem}{\label{l2.6}}\cite{Li_Math.Z.}
		Let \( H \) be an entire function satisfying \( |H(s)| = O(e^{k|s|}) \) for some real number \( k \) and for a fixed \( t = t_1 \),
		\[
		|H(\sigma + i t_1)| = O\left( \frac{1}{e^{\eta |\sigma|}} \right)
		\]
		for some \( \eta > 0 \) as \( \sigma \to \pm\infty \). Then \( H \equiv 0 \).
	\end{lem}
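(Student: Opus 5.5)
The plan is to reduce the statement to a classical fact about entire functions of exponential type, the logarithmic integral theorem for the Cartwright class. First translate the line: put $G(z)=H(z+it_1)$. Then $G$ is entire, $|G(z)|\le C'e^{k|z|}$ (with a new constant $C'=Ce^{k|t_1|}$), so $G$ is of exponential type. On the real axis the hypothesis gives
\[
|G(x)|=|H(x+it_1)|\le C e^{-\eta|x|},\qquad x\in\mathbb{R},
\]
for $|x|$ large. Since $G$ is continuous, this bound holds for all real $x$ after enlarging $C$. It suffices to show $G\equiv 0$.

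Suppose $G\not\equiv 0$. Because $\log^+|G(x)|\le \log^+ C$ is bounded on $\mathbb{R}$, we have
\[
\int_{-\infty}^{\infty}\frac{\log^+|G(x)|}{1+x^2}\,dx<\infty .
\]
Together with the exponential type, this says $G$ belongs to the Cartwright class. The key step is to invoke the classical theorem (Cartwright, or equivalently Krein's theorem that $G$ restricted to the upper and lower half-planes is of bounded type). For a non-trivial function of this class, the full logarithmic integral converges:
\[
\int_{-\infty}^{\infty}\frac{\log|G(x)|}{1+x^2}\,dx>-\infty .
\]
In the upper half-plane this comes from the Poisson–Jensen representation of $\log|G|$ for functions of bounded type. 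Concretely, $\log|G(z)|$ is bounded above by the Poisson integral of $\log|G(x)|$ plus a term $\tau\,\mathrm{Im}\,z$. At a point $z_0$ with $G(z_0)\neq 0$, this forces the Poisson integral of the boundary values to be finite.

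On the other hand, the decay hypothesis yields
\[
\int_{-\infty}^{\infty}\frac{\log|G(x)|}{1+x^2}\,dx\le \int_{-\infty}^{\infty}\frac{\log C-\eta|x|}{1+x^2}\,dx=-\infty ,
\]
because $\int |x|/(1+x^2)\,dx$ diverges. This contradicts the previous display, so $G\equiv 0$ and hence $H\equiv 0$.

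The main obstacle is justifying the convergence of the logarithmic integral from exponential type plus boundedness on the line. If one prefers not to quote Cartwright's theorem, I would argue directly in the upper half-plane. Phragmén–Lindelöf applied to $G(z)e^{i\tau z}$, with $\tau$ larger than the type, shows $|G(x+iy)|\le C e^{\tau y}$ for $y\ge 0$. Hence $G(z)e^{i\tau z}$ is bounded and analytic in $\mathrm{Im}\,z>0$. For a bounded analytic function that is not identically zero in the half-plane, the boundary values satisfy $\int \log|f(x)|\,(1+x^2)^{-1}dx>-\infty$; this is the half-plane version of Jensen's inequality, obtained via the conformal map to the disc and the standard $H^\infty$ fact that $\log|f^*|\in L^1$ on the circle. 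Since $|e^{i\tau x}|=1$ on $\mathbb{R}$, this gives exactly the required convergence and completes the argument.
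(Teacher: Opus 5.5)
Your proof is correct. The paper contains no argument of its own for Lemma~\ref{l2.6}. It is quoted from Li \cite{Li_Math.Z.} and used as a black box in the proofs of Theorems~\ref{t1.3} and~\ref{t1.4}, so there is no in-paper route to compare yours with.

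What you give is the standard proof. After moving the line $t=t_1$ to the real axis, exponential type together with $\log|G(x)|\le \log C-\eta|x|$ gives $\int_{-\infty}^{\infty}\log|G(x)|\,(1+x^2)^{-1}\,dx=-\infty$. That is impossible for a nonzero function of the Cartwright class. Quoting Cartwright/Krein makes the argument two lines long. Your second variant, via Phragm\'en--Lindel\"of and $H^\infty$ of the half-plane, makes it self-contained.

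Three small corrections:
\begin{itemize}
\item The translated bound should read $|G(z)|\le Ce^{|k||t_1|}e^{k|z|}$, and the case $k<0$ is trivial by Liouville.
\item In the self-contained variant, carry out the Phragm\'en--Lindel\"of step separately in the quadrants $\{x>0,\,y>0\}$ and $\{x<0,\,y>0\}$, where order $1<2$ is admissible. A function of positive exponential type does not meet the growth hypothesis of Phragm\'en--Lindel\"of in a half-plane. With $\tau\ge k$, the bound on the imaginary axis is then immediate.
\item In that variant you can also skip the $L^1$ property of $\log|f^*|$. For $f=G(z)e^{i\tau z}$, bounded and analytic in the upper half-plane, use the Poisson majorization $\log|f(z_0)|\le \frac{y_0}{\pi}\int_{-\infty}^{\infty}\frac{\log|f(x)|}{(x-x_0)^2+y_0^2}\,dx$. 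Combined with $\log|f(x)|\le\log C-\eta|x|$, it already forces $f(z_0)=0$ at every $z_0=x_0+iy_0$ with $y_0>0$.
\end{itemize}
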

	\begin{lem}{\label{l2.7}}\cite{Hayman}If  nonzero complex numbers $b_n$, such that \( \sum |b_n|^{-1} \) converges, then the product
		\[
		E(s) = \prod_{n=1}^{\infty} \left(1 - \frac{s}{b_n}\right)
		\]
		is an entire function, whose zero set is \( D \), and satisfies the estimate
		\[
		\log |E(s)| \leq \int_0^{|s|} \frac{n(r, D)}{r} dr + |s| \int_{|s|}^{\infty} \frac{n(r, D)}{r^2} dr.
		\]
	\end{lem}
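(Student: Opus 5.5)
We argue directly from the product, bounding each factor separately and turning the resulting sum into a Stieltjes integral against the counting function. Throughout, $D=\{b_n\}$ is counted with multiplicity and $n(r)=n(r,D)$. Since the $b_n$ are nonzero and $\sum|b_n|^{-1}<\infty$, the points $b_n$ have no finite accumulation point, so $n(r)$ is finite for every $r$. Moreover $n(r)=0$ for $0\le r<r_0:=\min_n|b_n|>0$.

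\textbf{Convergence and the zero set.} For $|s|\le R$ we have $\sum_n |s/b_n|\le R\sum_n|b_n|^{-1}<\infty$. By the standard criterion for infinite products, $\prod(1-s/b_n)$ converges absolutely and uniformly on compact sets. Hence $E$ is entire, and its zeros are exactly the $b_n$ with the right multiplicities, because a locally uniformly convergent product vanishes only where a factor does. This gives the first assertion.

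\textbf{Factorwise bound and Stieltjes form.} For every $n$,
\[
\log\Bigl|1-\frac{s}{b_n}\Bigr|\le \log\Bigl(1+\frac{|s|}{|b_n|}\Bigr),
\]
so, writing $x=|s|$,
\[
\log|E(s)|\le \sum_n\log\Bigl(1+\frac{x}{|b_n|}\Bigr)=\int_{0}^{\infty}\log\Bigl(1+\frac{x}{t}\Bigr)\,dn(t).
\]
We integrate by parts on $[0,T]$ and let $T\to\infty$. The contribution at $0$ vanishes since $n\equiv 0$ near $0$.

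The contribution at infinity is where the hypothesis is used. The boundary term is $n(T)\log(1+x/T)\le x\,n(T)/T$, so we need $n(T)=o(T)$. This follows from
\[
\frac{n(T)}{T}\le \frac{n(\sqrt T)}{T}+\sum_{|b_n|>\sqrt T}\frac{1}{|b_n|}.
\]
The second term tends to $0$ as the tail of a convergent series. For the first, note that $\sum_{|b_n|\le\sqrt T}|b_n|^{-1}\ge n(\sqrt T)/\sqrt T$, and the left side is bounded by the convergent sum; hence $n(\sqrt T)\le C\sqrt T$, so $n(\sqrt T)/T\to 0$.

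Since $\frac{d}{dt}\log(1+x/t)=-\frac{x}{t(t+x)}$, we obtain
\[
\log|E(s)|\le\int_0^\infty n(t)\,\frac{x}{t(t+x)}\,dt .
\]

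\textbf{Splitting the integral.} We split at $t=x$:
\begin{itemize}
\item for $0<t\le x$ we use $\frac{x}{t(t+x)}\le \frac1t$;
\item for $t\ge x$ we use $\frac{x}{t(t+x)}\le \frac{x}{t^2}$.
\end{itemize}
This yields exactly
\[
\log|E(s)|\le\int_0^{|s|}\frac{n(r,D)}{r}\,dr+|s|\int_{|s|}^\infty\frac{n(r,D)}{r^2}\,dr .
\]
Both integrals are finite. The first is finite because $n$ vanishes near $0$. For the second, partial summation gives $\int_{x}^\infty n(t)t^{-2}\,dt\le n(x)/x+\sum_{|b_n|>x}|b_n|^{-1}<\infty$.

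The only delicate point is justifying the integration by parts at infinity, namely $n(T)=o(T)$, and that is handled by the tail argument above.
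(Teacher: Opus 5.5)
Your proof is correct and follows essentially the standard route. The paper gives no proof of its own; it cites Hayman, where this estimate is the genus-zero case of the canonical-product bound and is proved exactly as you do: the factorwise bound $\log|1-s/b_n|\le\log(1+|s|/|b_n|)$, a Stieltjes integration by parts against $n(t,D)$, and the split $\frac{x}{t(t+x)}\le\min\{\frac1t,\frac{x}{t^2}\}$ at $t=|s|$. Your treatment of the boundary term via $n(T)=o(T)$ is sound. Alternatively, writing $\log(1+x/|b_n|)=\int_{|b_n|}^\infty\frac{x\,dt}{t(t+x)}$ and summing by Tonelli gives the same integral with no boundary term.
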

	
	\section{proofs of theorems}
	\begin{proof}[Proof of Theorem \ref{t1.1}]
		First let us assume $\mathcal{L}_1\not\equiv\mathcal{L}_2$. It is given that $\mathcal{L}_1$ and $\mathcal{L}_2$ share the set $S=\{\alpha_1,\alpha_2,\ldots,\alpha_t\}$ CM and has only pole at $s=1$, then we can write it as \bea\label{e3.1} \frac{(\mathcal{L}_1-\alpha_1)(\mathcal{L}_1-\alpha_2)\ldots(\mathcal{L}_1-\alpha_t) }{(\mathcal{L}_2-\alpha_1)(\mathcal{L}_2-\alpha_2)\ldots(\mathcal{L}_2-\alpha_t) }=(s-1)^ke^{p(s)},\eea for some integer $k$. Since $L$-function with positive degree has order one [see p. 150, \cite{Steuding_Sprin(07)}], then from {\it{Lemma \ref{l2.1}}}, we have $p(s)$ a polynomial of degree one. Let us consider $p(s)=\hat{a}s+\hat{b},$ where $\hat{a},\hat{b}$ are some complex constants. Next we will show $(s-1)^ke^{\hat{a}s+\hat{b}}=1$. If $0,1\not\in S$ then taking $R(s)=\sigma\lra\infty$ it can be shown easily. Here in this theorem we will consider only the case when $0,1\in S$.
		
		\par Here first we will show $\mathcal{L}_1$ and $\mathcal{L}_2$ share all the trivial zeros with multiplicities in negative half plane, except finitely many. Assume $\{s_n\}_{n=1}^\infty$ and $\{t_n\}_{n=1}^\infty$ are two distinct sequence of zeros $\mathcal{L}_1$ and $\mathcal{L}_2$. If there is a multiple zero of multiplicity $m$ then it appears $m$ times in the sequence. Since $s_n$ and $t_n$ are distinct so it is possible to get a positive $\delta=min{|s_n-t_n|}$. From {\it{Lemma \ref{l2.4}}} and {\it{Subcase-1.2}} of {\it{Theorem 1.2}} in \cite{Kundu_Banerjee_Comp.var} we can have the sequence $\{r_n\}$ of zeros of $\mathcal{L}_2-\alpha_1$ and $\mathcal{L}_1-\alpha_i$, and each $r_n$ lies within $|s_n|^{-C_1\log |s_n|}$ of $s_n$. Again $r_n$ is also zero of $\mathcal{L}_2-\alpha_1$, hence it also lies 
		 with in $|t_n|^{-C_1\log |t_n|}$ of $t_n$. 
		Therefore \beas |s_n-t_n|\leq |s_n-r_n|+|t_n-r_n|\lra 0\;\text{as}\;n\lra\infty.\eeas 
		\\Again for some large $N$, such that  \beas \delta\leq |s_n-t_n|<\frac{\delta}{2},\;\text{for}\; n>N,\eeas a contradiction. 
		\par Now if $\{r_n\}$ is zero of $\mathcal{L}_1$ which is distinct from $\{t_n\}$, in the negative half plane, therefore $r_n$ coincides with the trivial zeros $s_n$ of $\mathcal{L}_1$, for some $n>n_1$. Since $r_n$ is also zero $\mathcal{L}_2-\alpha_1$, then from {\it{Lemma \ref{l2.4}}}, we know each $r_n$ lies within $|t_n|^{-C_1\log |t_n|}$ of trivial zero $t_n$ of $\mathcal{L}_2$. Hence for some $n$, $s_n$ lie within $|t_n|^{-C_1\log |t_n|}$ of trivial zero $t_n$.  Therefore we can have a large number $K$, such that  \beas \delta\leq |s_n-t_n|<\frac{\delta}{2},\;\text{for}\; n>K,\eeas a contradiction.  
		
		\par  Hence $s_n=t_n$ for some $n>N_0$, and therefore we can have a rational function $R$, so that $\frac{\chi_1(s)}{\chi_2(s)}=R(s)Q^{1-2s}$ where $Q=Q_1/Q_2\in \mathbb{Q}$, where $\chi_i(s)=\frac{\omega_i Q_i^{1-2s}\prod_{j=1}^{K}\Gamma(\lambda_{i_j}(1-s)+\ol{\nu}_{i_j})}{\prod_{j=1}^{K}\Gamma(\lambda_{i_j}s+\nu_{i_j})}$ for $i=1,2$. 
		  
		  	\par Now as discussed in Subcase-1.2 of {\it{Theorem 1.2}} of {\cite{Kundu_Banerjee_Comp.var}} here we have a $\{r_n\}_{n=1}^{\infty}$ be a sequence of zeros of $\mathcal{L}_2-\alpha_1$ in some $\sigma<-A\;(A>0)$, $Re(r_{n})\lra-\infty$ as $n\lra \infty$. Now due to the set sharing property, $\{r_n\}_{n=1}^{\infty}$ can be zero of any $\mathcal{L}_1-\alpha_i\;(\not=0)$, as in negative plane $\mathcal{L}_1$, $\mathcal{L}_2$ share trivial zeros. Here assume $\{r_n\}_{n=1}^{\infty}$ be a zero of $\mathcal{L}_1-\alpha_i\;(\alpha_i\not=0)$.
		  Then from \bea\label{e3.7}\frac{\mathcal{L}_1(s)}{\mathcal{L}_2(s)}=R(s)Q^{1-2s}\frac{\ol {\mathcal{L}_1(1-\ol s)}}{\ol {\mathcal{L}_1(1-\ol s)}},\eea putting $s=r_n$ and taking limit $n\lra\infty$ we have from above, $\lim_{n\lra\infty}R(r_n)Q^{1-2r_n}=\text{non-zero constant}$, which implies $Q=1$.
		  \par Therefore we have,
		  \beas \mathcal{L}_1(s)-\mathcal{L}_2(s)=\chi_2(s)\left(R(s)\ol{\mathcal{L}_1(1-\ol{s})}-\ol{\mathcal{L}_2(1-\ol{s})}\right)=\chi_2(s)\left(R(s)\ol{\mathcal{L}_1(1-\ol{s})-\mathcal{L}_2(1-\ol{s})}\right).\eeas 
\;\;		\\{\bf{Case-1.}} $1\in S$ but $0\not\in S$. Without loss of generality let us assume $\alpha_1=1$.
		\par  Again $\mathcal{L}_i$ can be represented by Dirichlet series, i.e., $\mathcal{L}_i(s)=\sum_{n=1}^{\infty}\frac{a_i(n)}{n^s},\;i=1,2$; absolutely convergent for $\sigma>1$ where $a_i(1)=1$.
		\par Now we clearly have, 
		\bea\label{e3.2}\frac{A_1}{n_a^\sigma}\leq |\mathcal{L}_1-1}|\leq\frac{A_2}{n_a^\sigma},\;\;\;{\text{as}\;\;\;\sigma\lra+\infty,\eea where $A_i\;(i=1,2)$ are two non-zero constants and $n_{{a}}=\min\{n\;(\geq 2):a_1(n)\not=0\}$. 
		\par Similarly we can have some constants $B_i,\;i=1,2$ such that, 
		\bea\label{e3.3}\frac{B_1}{n_b^\sigma}\leq |\mathcal{L}_2-1}|\leq\frac{B_2}{n_b^\sigma},\;\;\;{\text{as}\;\;\;\sigma\lra+\infty,\eea where $n_b$ is the smallest integer such that $a_2(n)\not=0$. Therefore clearly we have from (\ref{e3.2}) and (\ref{e3.3}), we have 
		
		 hence we can get from above, \bea\label{e3.5}\lim_{\sigma\lra+\infty}\frac{\mathcal{L}_1-1}{\mathcal{L}_2-1}\cdot\left(\frac{n_a}{n_b}\right)^s=C,\eea for some non-zero constant $C$.\par Now let us consider the following function \bea\label{e3.6}G&=&\frac{\mathcal{L}_1-1}{\mathcal{L}_2-1}\cdot\left(\frac{n_a}{n_b}\right)^s\cdot\frac{(\mathcal{L}_1-\alpha_2)\ldots(\mathcal{L}_1-\alpha_t)}{(\mathcal{L}_2-\alpha_2)\ldots(\mathcal{L}_2-\alpha_t)}\\\nonumber&=&q^s\frac{(\mathcal{L}_1-1)(\mathcal{L}_1-\alpha_2)\ldots(\mathcal{L}_1-\alpha_t)}{(\mathcal{L}_2-1)(\mathcal{L}_2-\alpha_2)\ldots(\mathcal{L}_2-\alpha_t)}=q^s(s-1)^ke^{\hat{a}s+\hat{b}},\eea for some $q=\frac{n_a}{n_b}(\in\mathbb{Q}^{+})$. 
		We can write $q=e^{q'}$, then we can write it as $q^{s}(s-1)^{k}e^{\hat{a}s+\hat{b}}=(s-1)^ke^{(q'+\hat{a})s+\hat{b}}=(s-1)^ke^{a's+\hat{b}}$ where $a'=q'+\hat{a}$.
		Let us consider $a'=a_1+ia_2$ and $\hat{b}=b_1+ib_2$.\par Again taking limit $\sigma\lra\infty$, we have \beas\lim_{\sigma\lra+\infty}\left|q^s\frac{(\mathcal{L}_1-1)}{(\mathcal{L}_2-1)}\cdot\frac{(\mathcal{L}_1-\alpha_2)\ldots(\mathcal{L}_1-\alpha_{t})}{(\mathcal{L}_2-\alpha_2)\ldots(\mathcal{L}_2-\alpha_{t})}\right|=|C|
		&=&\lim_{\sigma\lra+\infty}|(s-1)^ke^{a's+\hat{b}}|\\=|C|&=&\lim_{\sigma\lra+\infty}|\sigma-1+it|^ke^{a_1\sigma-a_2t+b_1}
		.\eeas \par Therefore we must have $a_1=0=k$.
		\par  Now, \beas \lim_{\sigma\lra+\infty}|e^{a's+\hat{b}}|=|C|=\lim_{\sigma\lra+\infty}e^{b_1-a_2t},\eeas $\forall t\in\mathbb{R}$, hence we must have $a_2=0$. Therefore we have $a'=\hat{a}+q'=0$. 
		\par Therefore we have $G=e^{\hat{b}}$ and hence we get from (\ref{e3.6}) \bea\label{e3.7}\frac{(\mathcal{L}_1-1)(\mathcal{L}_1-\alpha_2)\ldots(\mathcal{L}_1-\alpha_{t})}{(\mathcal{L}_2-1)(\mathcal{L}_2-\alpha_2)\ldots(\mathcal{L}_2-\alpha_{t})}=e^{\hat{b}}q^{-s}.\eea

		\par	
		 From the proof of {\it{Lemma }}(a) (see p. 2487, \cite{H.Ki_Adv.Math(12)}) we know for some sufficiently large constant $\kappa(>0)$ and large $\kappa_0(>0)$, $\mathcal{L}_1$, $\mathcal{L}_2$, $\mathcal{L}_1-\mathcal{L}_2$ and the Gamma function $\prod_{j=1}^{K}(\Gamma(\lambda _{2_j}s + \nu_{2_j}))^{-1}$, in their functional equation have same zeros 
		in the region $Re(s)<-\kappa_0$ and $|Im(s)|<\kappa$ and  $\mathcal{L}_1$, $\mathcal{L}_2$ and $\mathcal{L}_1-\mathcal{L}_2$ have no zero in $Re(s)\geq \kappa_0$.
		\par Again Gamma function $(\Gamma(s))^{-1}$ has zeros at $s=-1,-2,\ldots,-n,\ldots$, therefore   
		$\prod_{j=1}^{K}(\Gamma(\lambda _{2_j}s + \nu_{2_j}))^{-1}$ has zeros at $s=\frac{-n-\nu_{2_j}}{\lambda_{2_j}}$ for $n=1,2,\ldots$ and $j=1,2,\ldots,K$. 
		Since it is assumed that $d_\mathcal{L}>0$, we must have $K>0$.
		Here for some $j$ or fixing some $j$ it is possible to get a infinite sequence $\{s_n\}_{n=1}^{\infty}$ of common zeros of $\mathcal{L}_1$, $\mathcal{L}_2$ and $\prod_{j=1}^{K}(\Gamma(\lambda _{2_j}s + \nu_{2_j}))^{-1}$  in $Re(s)<-\kappa_0$ and $|Im(s)|<\kappa$, where for $n\lra\infty$ we have, $Re(s_n)\lra-\infty$.
		
		\par From (\ref{e3.7}) we have, \beas \frac{(\mathcal{L}_1(s_n)-1)(\mathcal{L}_1(s_n)-\alpha_2)\ldots(\mathcal{L}_1(s_n)-\alpha_{t})}{(\mathcal{L}_2(s_n)-1)(\mathcal{L}_2(s_n)-\alpha_2)\ldots(\mathcal{L}_2(s_n)-\alpha_{t})}=e^{\hat{b}}q^{-s_n}\implies 1=|e^{\hat{b}}q^{-s_n}|=e^{Re(\hat{b})}q^{-Re(s_n)},\eeas now $e^{Re(\hat{b})}q^{-Re(s_n)}\lra\infty\;\text{or}\;0$ as $n\lra\infty$ according as $q>1$ or $<1$ but $\{e^{Re(\hat{b})}q^{Re(-s_n)}\}_{n=1}^{\infty}$ is a constant sequence. Hence we must have $q=1$ and then $e^{\hat{b}}=1$.
		\\{\bf{Case-2.}} Let us assume $1,0\in S$. Without loss of generality let us assume $1=\alpha_1,\;0=\alpha_2$.
		\par Then we can write (\ref{e3.7}) as \bea\label{e3.8} \frac{\mathcal{L}_1(\mathcal{L}_1-1)(\mathcal{L}_1-\alpha_3)\ldots(\mathcal{L}_1-\alpha_t)}{\mathcal{L}_2(\mathcal{L}_2-1)(\mathcal{L}_2-\alpha_3)\ldots(\mathcal{L}_2-\alpha_t)}=e^{\hat{b}}q^{-s}.\eea
		\par Since	$\mathcal{L}_1$, $\mathcal{L}_2$  have same trivial zero in negative half plane, so we
		have, $ \frac{\mathcal{L}_1(s)}{\mathcal{L}_2(s)}=R(s)Q^{1-2s}\frac{\ol {\mathcal{L}_1(1-\ol s)}}{\ol{\mathcal{L}_2(1-\ol{s})}}$. Also it is shown that $Q=1$, and hence $ \frac{\mathcal{L}_1(s)}{\mathcal{L}_2(s)}=R(s)\frac{\ol {\mathcal{L}_1(1-\ol s)}}{\ol{\mathcal{L}_2(1-\ol{s})}}$, where $\lim_{|s|\lra\infty}R(s)=\text{non zero constant}$. Using this we have from above(\ref{e3.8}),  \bea\label{e3.9} R(s)\frac{\ol {\mathcal{L}_1(1-\ol s)}(\mathcal{L}_1-1)(\mathcal{L}_1(s)-\alpha_3)\ldots(\mathcal{L}_1(s)-\alpha_t)}{\ol {\mathcal{L}_2(1-\ol s)}(\mathcal{L}_2-1)(\mathcal{L}_2(s)-\alpha_3)\ldots(\mathcal{L}_2(s)-\alpha_t)}=e^{\hat{b}}q^{-s}.\eea
		Putting $s=s_n$ the trivial zero of $\mathcal{L}_1$, $\mathcal{L}_2$ we have from (\ref{e3.9}), \beas \left|\frac{\ol{\mathcal{L}_1(1-\ol{s_n})}}{\ol{\mathcal{L}_2(1-\ol{s_n})}}\right|=\left|e^{\hat{b}}\frac{q^{-s_n}}{R(s_n)}\right|.\eeas \par Now taking limit as $n\lra\infty$ we have from above, $\lim_{n\lra\infty}|e^{\hat{b}}q^{-s_n}/R(s_n)|=1$, 
		 a contradiction. Hence $q$ must be $1$.
			\par Therefore from ({\ref{e3.9}}) we have  \beas R(s)\frac{\ol {\mathcal{L}_1(1-\ol s)}(\mathcal{L}_1-1)(\mathcal{L}_1(s)-\alpha_3)\ldots(\mathcal{L}_1(s)-\alpha_t)}{\ol {\mathcal{L}_2(1-\ol s)}(\mathcal{L}_2-1)(\mathcal{L}_2(s)-\alpha_3)\ldots(\mathcal{L}_2(s)-\alpha_t)}=e^{\hat{b}},\eeas hence $\lim_{n\lra\infty}R(s_n)=e^{\hat{b}}(=c,\;\text{say}).$
	

		\[
		\]
		

		\par Now from (\ref{e3.8}) we have \bea\label{e3.10} (\mathcal{L}_1-\alpha_1)(\mathcal{L}_1-\alpha_2)\ldots(\mathcal{L}_1-\alpha_{t})=c(\mathcal{L}_2-\alpha_1)(\mathcal{L}_2-\alpha_2)\ldots(\mathcal{L}_2-\alpha_{t}).\eea
		\par	Next we will show $\mathcal{L}_1=\mathcal{L}_2$.
		Let us assume, \beas(\mathcal{L}_i-\alpha_1)(\mathcal{L}_i-\alpha_2)\ldots(\mathcal{L}_i-\alpha_{t})=\mathcal{L}_i^t+a_1\mathcal{L}_i^{t-1}+a_2\mathcal{L}_i^{t-1}+\ldots+a_{t-1}\mathcal{L}_i+a_t,\eeas for $i=1,2$, where $a_j= (-1)^j \sum_{1\leq i_1<i_2<\ldots<i_j\leq t}\alpha_{i_1}
		\alpha_{i_2}\ldots \alpha_{i_j}\;\;\;j=1,2,\ldots,t$. 
		\par From (\ref{e3.10}) we have \bea\nonumber
		(\mathcal{L}_1^t-c\mathcal{L}_2^t)+a_1(\mathcal{L}_1^{t-1}-c\mathcal{L}_2^{t-1})+a_2(\mathcal{L}_1^{t-2}-c\mathcal{L}_2^{t-2})+\ldots+a_{t-1}(\mathcal{L}_1-c\mathcal{L}_2)&=&0,\eea i.e., \bea\label{e3.11}&&(\mathcal{L}_1^t-c\mathcal{L}_2^t)+a_1(\mathcal{L}_1^{t-1}-c\mathcal{L}_2^{t-1})+a_2(\mathcal{L}_1^{t-2}-c\mathcal{L}_2^{t-2})+\ldots+a_{t-2}(\mathcal{L}_1^{2}-c\mathcal{L}_2^{2})\\\nonumber&=&-a_{t-1}(\mathcal{L}_1-c\mathcal{L}_2).\eea
		\par Since $\frac{\mathcal{L}_1(s)}{\mathcal{L}_2(s)}=R(s)\frac{\ol {\mathcal{L}_1(1-\ol s)}}{\ol {\mathcal{L}_1(1-\ol s)}}$ for some rational function $R(s)$, therefore  $\mathcal{L}_1$, $\mathcal{L}_2$ have the same set of trivial zeros ($\cup_{j=1}^{K}\cup_{n=1}^{\infty}\{s:s=-\frac{n+\mu_{2_j}}{\lambda_{2_j}}\}$) which comes from the zeros of $\chi_2(s)$. Now from {\it{Lemma \ref{l2.5}}}, for some large positive $k$ and $A$ in the region  $Re(s)<-A$ and $|Im(s)|<B$, $\mathcal{L}_1$, $\mathcal{L}_2$, $\mathcal{L}_1^{i}-c\mathcal{L}_2^{i}$ and $\prod_{j=1}^{K}(\Gamma(\lambda _{2_j}s + \nu_{2_j}))^{-1}$ have the same zeros. Also from {\it{Lemma \ref{l2.5}}} in this negative half plane we can have  $s_1$, a trivial zero of $\mathcal{L}_1$ and $\mathcal{L}_2$ of order $p$, which is also a zero of $\mathcal{L}_1^{i}-c\mathcal{L}_2^{i}$ of order $ip,\;1\leq i\leq t $. Then clearly from the  (\ref{e3.11}) we have $s_1$ is the zero of the left side of multiplicity at least $2p$, whereas the right part has multiplicity $p$, hence a contradiction.

		\par Therefore we must have, either $a_{t-1}=0$ or $\mathcal{L}_1=c\mathcal{L}_2\implies\mathcal{L}_1=\mathcal{L}_2$. \\Now assume $\mathcal{L}_1\not=\mathcal{L}_2$ and $a_{t-1}=0$, then we have, \bea\label{e3.12}&&(\mathcal{L}_1^t-c\mathcal{L}_2^t)+a_1(\mathcal{L}_1^{t-1}-c\mathcal{L}_2^{t-1})+a_2(\mathcal{L}_1^{t-2}-c\mathcal{L}_2^{t-2})+\ldots+a_{t-3}(\mathcal{L}_1^3-c\mathcal{L}_2^3)\\\nonumber&=&-a_{t-2}(\mathcal{L}_1^{2}-c\mathcal{L}_2^{2}). \eea  From {\it{Lemma \ref{l2.5}}}, in some negative half plane we can have a trivial zero   $s_2$ of $\mathcal{L}_1$, $\mathcal{L}_{2}$ of order $q$, which is also a zero of $\mathcal{L}_1^{i}-c\mathcal{L}_2^{i}$ of order $iq,\;1\leq i\leq t $. Clearly from the above equation (\ref{e3.12}) we have $s_2$ is the zero of the left side of multiplicity at least $3q$, whereas the right part has multiplicity $2q$, hence a contradiction. Therefore $a_{t-2}=0$ or $\mathcal{L}_1^2=c\mathcal{L}_2^2$. Again $\mathcal{L}_1^2=c\mathcal{L}_2^2\implies\mathcal{L}_1=\mathcal{L}_2$  \par  Proceeding similarly we will have, $a_{t-i}=0$ or $\mathcal{L}_1^{t-i}=c\mathcal{L}_2^{t-i}$ for $i=1,2,\ldots,t$ and finally we will have $\mathcal{L}_{1}^{t}=c\mathcal{L}_2^{t}$ which implies $\mathcal{L}_1=\omega\mathcal{L}_2$ and taking $\sigma\lra\infty$, we  have $\omega=1$, hence $\mathcal{L}_1= \mathcal{L}_2$.

		\end{proof}
	\begin{proof}[Proof of Theorem \ref{t1.3}] The proof of this theorem is mainly based on the idea of a paper of Li \cite{Li_Math.Z.}.
		\par	Here it is given that $\mathcal{L}_1$ and $\mathcal{L}_2$ share $\alpha$ except a set $G$ of order one convergence type, i.e., $\int_{r_0}^{+\infty}\frac{n(t,G)}{t^2}< +\infty$ for some $r_0\geq 0$. Now let $a_i,i=1,2,\ldots$ be  the non-zero elements of the set $G$ repeated according to its multiplicity then we have \beas \sum_{i=1}^{\infty}|a_i|^{-1}=\int_{0}^{\infty}\frac{d(n(t,G\backslash\{0\}))}{t}\leq \int_{0}^{+\infty}\frac{n(t,G\backslash\{0\})}{t^2}<+\infty.\eeas 
		
		\par	Let us consider the following auxiliary function $$H=\frac{\mathcal{L}_1-\alpha}{\mathcal{L}_2-\alpha}.$$ 	\par Clearly zeros and poles of $H$ come from the set $G$ as well as the poles of $L$ and $g$ respectively. Let us consider two sets $G_1$ and $G_2$  such that $G_1\cup G_2=G\backslash\{0\}$ and the elements of $G_1$ are zeros of $H$ and the elements of $G_2$ are poles of $H$(i.e those point where $\mathcal{L}_2-\alpha$ has zero of order $p$ and $\mathcal{L}_1-\alpha$ has zero of order $q$ and $p>q\;(q\geq0)$).   
		\par	Next let us construct the functions $h_i(s)=\prod_{k=1}^{\infty} \left(1-\frac{s}{a^{i}_k}\right)$, where $a^{i}_k\;(i=1,2)$ are points of $G_i$  arranging as $|a_k^i|\leq |a_{k+1}^i|$, repeated according to their multiplicities.. 
		Since $G$ is of order one convergence type then  
		clearly $G_i\;(i=1,2)$ are also order one convergence type, i.e., $\int_{r_0}^{+\infty}\frac{n(t,G_i)}{t^2}dr<+\infty\;\;(i=1,2)$ for some $r_0\geq 0$. 
		\par We have, \beas \sum |a^i_{k}|^{-1}=\int_{0}^{+\infty}\frac{dn(t,G_i)}{t}\leq \lim_{t\lra\infty}\frac{n(t,G_i)}{t}+\int_{0}^{+\infty}\frac{n(t,G_i)}{t^2}dt=\int_{0}^{+\infty}\frac{n(t,G_i)}{t^2}dt<+\infty.\eeas 
		
		\par Using {\it{Lemma \ref{l2.5}}} we have, \bea\label{e3.13} \log|h_i(s)|&\leq& \int_{0}^{|s|}\frac{n(t,G_i)}{t}dt+|s|\int_{|s|}^{\infty}\frac{n(t,G_i)}{t^2}dt\\\nonumber&=&  \int_{0}^{r_0}\frac{n(t,G_i)}{t}dt+\int_{r_0}^{|s|}\frac{n(t,G_i)}{t}dt+|s|\int_{|s|}^{\infty}\frac{n(t,G_i)}{t^2}dt.   \eea
		
		As we have $\int_{r_0}^{\infty}\frac{n(t,G_i)}{t^2}dt<+\infty$ for some $r_0\geq 0$, it follows that for any small positive $\epsilon$, we can have some large $r\geq r_0$, so that $\int_{r}^{\infty}\frac{n(t,G_i)}{t^2}dt<\epsilon$ 
		\par Again as $n(t,G_i)$ is an increasing function so we get 
		
		\beas &&n(r,G_i)\int_{r}^{r_1}\frac{dt}{t^2}\leq\int_{r}^{r_1}\frac{n(t,G_i)}{t^2}dt\\&&\frac{n(r,G_i)}{r}\leq n(r,G_i)\int_{r}^{r_1}\frac{dt}{t^2}\leq\int_{r}^{r_1}\frac{n(t,G_i)}{t^2}dt<\epsilon,\eeas hence for some large $r$ we have $n(r,G_i)\leq r\epsilon$.
		\par From (\ref{e3.13}), we can have $\log |h_i(s)|\leq 3\epsilon |s|$. As $h_i$'s, ($i=1$, $2$) are entire functions, it follows that $T(r,h_i)=m(r,h_i)$. Then, \bea\label{e3.14} T(r,h_i)=m(r,h_i)=\frac{1}{2\pi}\int_{0}^{2\pi}\log ^{+}|h_i(re^{i\theta})|d\theta =\frac{1}{2\pi}\int_{\theta\in \Theta}\log |h_i(re^{i\theta})|d\theta\leq O(r),   \eea
		where	$\Theta=\{\theta:|h_i(re^{i\theta})|>1\}$.

		\par since $h_2(s)$ is an entire function so for a suitable integer $k, l$ we will have $$F=\frac{h_2(s)(\mathcal{L}_1-\mathcal{L}_2)s^l(s-1)^k}{\mathcal{L}_2-\alpha},$$ an entire function. Here clearly we have, $$T(r,F)=O(r\log r),$$ Hence order of $F$ is less equal to one.\\ Now it is clear there are zeros of $F$ which are neither $\alpha$ points nor trivial zeros of of $\mathcal{L}_1$, $\mathcal{L}_2$. Next we will estimate all those zeros of $F$. Now for an $L$-function $L$ and a complex number $a$ we denote by $n(R,\frac{1}{L-a})$ denotes
		the number of zeros (counting multiplicities) of $L-a$ within $|s| < R$. Also by \cite{Steuding_Sprin(07)} we know on the left half-plane
		$\sigma<0$, the zeros of $L-a$ have bounded imaginary
		parts and the number of these zeros having real part in $[-R, 0]$ is 
	\beas n_{-}(R,\frac{1}{L-a})=\frac{d}{2}R+O(1),\eeas
		where $d$ is the degree of $L$-function.
		\par Therefore for sufficiently large $R$ we have \beas n_{-}(R,\frac{1}{\mathcal{L}_1-\mathcal{L}_2})- n_{-}(R,\frac{1}{\mathcal{L}_1-\alpha})=O(1).\eeas \par Again we denote by $n_{+}(R,\frac{1}{L-a})$ denotes
		the number of zeros (counting multiplicities) of $L-a$ within $|s| < R$. By definition $n(R,\frac{1}{L-a})=n_{+}(R,\frac{1}{L-a})+n_{-}(R,\frac{1}{L-a})$.\\  Also by {\it{Lemma\ref{l2.3}}} we have, \beas n_{+}(R,\frac{1}{L-a})=\frac{d}{\pi}R\log\frac{R}{e}+\frac{R}{\pi}\log(\lambda Q^2)+O(\log R),\eeas where $\lambda=\prod_{j=1}^{K}\lambda_j^{2\lambda_j}$, and $\lambda_{j},Q,K$ are defined as in axiom (iii). Now we have \beas &&n_{+}(R,\frac{1}{\mathcal{L}_1-\mathcal{L}_2})- n_{+}(R,\frac{1}{\mathcal{L}_1-\alpha})\\&\leq&\frac{d}{\pi}R\log\frac{R}{e}+\frac{R}{\pi}\log(\lambda Q^2)+O(\log R)-\frac{d}{\pi}\sqrt{R^2-R_o^2}\log\frac{\sqrt{R^2-R_o^2}}{e}\\&&-\frac{\sqrt{R^2-R_o^2}}{\pi}\log(\lambda Q^2)+O(\log \sqrt{R^2-R_o^2})\\&\leq& \frac{d}{\pi}R\log\frac{R}{e}+\frac{R}{\pi}\log(\lambda Q^2)+O(\log R)-\frac{d}{\pi}R\sqrt{1-\frac{R_o^2}{R^2}}\log\left(\frac{R\sqrt{1-\frac{R_o^2}{R^2}}}{e}\right) \\&&-\frac{R\sqrt{1-\frac{R_o^2}{R^2}}}{\pi}\log(\lambda Q^2)+O(\log \left(R\sqrt{1-\frac{R_o^2}{R^2}} \right)) \leq O(\log R).\eeas
		\par It is easy to check that \beas n(r,\frac{1}{F})\leq n(r,\frac{1}{\mathcal{L}_1-\mathcal{L}_2})-n(r,\frac{1}{\mathcal{L}_2-\alpha})+n(r,\frac{1}{h_2})+O(\log r).\eeas
		Therefore \beas n(r,\frac{1}{F})\leq n(r, \frac{1}{h_2})+ O(\log r),\eeas which implies for some $r_o>0$ \bea\label{e3.15}\int_{r_o}^{\infty}\frac{n\left(r,\frac{1}{F}\right)}{r^2}\leq \int_{r_o}^{\infty}\frac{n(r,\frac{1}{h_2})+O(\lg r)}{r^2}\leq +\infty.\eea Therefore the zero set of $F$ which are not trivial zero of $\mathcal{L}_1$ and $\mathcal{L}_1-\alpha$ also order one convergence type. Here clearly we have the order $F$ is at most 1. Therefore by {\it{Lemma \ref{l2.7}}} we can have an entire function $h(s)$ whose zero set is non-zero zeros of $F$, and then by the
		classic Hadamard factorization theorem (see e.g. [1, p. 384]), we have $F(s)= s^lh(s)e^{as+b}$, for some complex numbers $a,b$, proceeding same as done before we will have $\log |h(s)|={O(|s|)}$. Next we will consider the  following cases. 
		\\{\bf{\underline{Case-1}}} Consider $\alpha\not=1$, $d_\mathcal{L}\not=0$.
		\par then $\frac{\mathcal{L}_1-\mathcal{L}_2}{\mathcal{L}_2-\alpha}=O\left(\frac{1}{2^\sigma}\right)$ as $\sigma\lra\infty$. Also let us consider the case when $\sigma\lra-\infty$, \bea\label{e3.16} \frac{\mathcal{L}_1-\mathcal{L}_2}{\mathcal{L}_2-\alpha}=\frac{\ol {\mathcal{L}_1(1-\ol s)}-\ol {\mathcal{L}_2(1-\ol s)}}{\ol {\mathcal{L}_2(1-\ol s)}-\alpha \chi(s)^{-1}}.\eea 
		
		Now $\chi(s)=\frac{\omega Q^{1-2s}\prod_{j=1}^{K}\Gamma(\lambda_{j}(1-s)+\ol{\nu}_{j})}{\prod_{j=1}^{K}\Gamma(\lambda_{j}s+\nu_{j})}$, from the relation  $\Gamma(s)\Gamma(1-s)=\frac{\pi}{sin\pi s}$, we have $\Gamma(\lambda_{j}s+\nu_{j})=\Gamma(1-\lambda_{j}s-\nu_{j})\frac{sin\pi(\lambda_{j}s+\nu_{j})}{\pi}$. \par Here consider a $\eta=\max_{1\leq j\leq k}\{\frac{im(\mu_j)}{\lambda_j}\}+1$. Now for $|Im(s)|\geq\eta$, $\prod_{j=1}^{k}sin\pi(\lambda_{j}s+\nu_{j})\not=0$. In this region fixing $t$, we can actually bound the value of $\prod_{j=1}^{k}|sin\pi(\lambda_{j}s+\nu_{j})|$ by some non-zero positive number.
		
		 From Stirling’s Approximation we know
		for large $s$, the Gamma function satisfies:
		\beas
		\Gamma(s) &\sim& \sqrt{2\pi} \, s^{s - \frac{1}{2}} e^{-s}
		\\&\sim& (2\pi)^{1/2}e^{(s-1/2)\log s-s}\sim e^{s\log s},	\eeas for sufficiently large $\sigma$ with \( |\arg(s)| < \pi \).
		
		\par Now for some $s$ with $t=\eta$ and  $\sigma\lra -\infty$, 
		\beas &&\frac{\Gamma(\lambda_j (1 - s) + \overline{\mu}_j) }{\Gamma(\lambda_j s + \mu_j) } ={\Gamma(\lambda_j (1 - s) + \overline{\mu}_j) }\Gamma(1-\lambda_{j}s-\mu_{j})\frac{sin\pi(\lambda_{j}s+\nu_{j})}{\pi}
		 \\&&{\sim} e^{(\lambda_j (1 - s) + \overline{\mu}_j)\log (\lambda_j (1 - s) + \overline{\mu}_j)+(1-\lambda_{j}s-\mu_{j})\log (1-\lambda_{j}s-\mu_{j})}e^{-O(t)}\sim e^{2\lambda_{j}(1-s)\log (1-s)}
		\eeas
		 Therefore we have when $\sigma\lra -\infty$, \[
		 \left| Q^{1-2s}\frac{ \prod_{j=1}^r \Gamma(\lambda_j (1 - s) + \overline{\mu}_j) }{ \prod_{j=1}^r \Gamma(\lambda_j s + \mu_j) } \right|
		 \sim 
		 e^{ 2 \sum_{j=1}^r \lambda_j |\sigma| \log |\sigma|},
		 \] 
		 and hence here we have $\chi(s)^{-1}\lra0$, as $\sigma\lra-\infty$. \\ Using this from (\ref{e3.16}), we have $\lim_{\sigma\lra-\infty}\frac{\mathcal{L}_1-\mathcal{L}_2}{\mathcal{L}_2-\alpha}=O(\frac{1}{2^{-\sigma}})$ for some fixed $t$. Now we have \beas |F(s)|\leq |s-1|^k|s|^le^{3\epsilon|s|}O\left(\frac{1}{2^{|\sigma|}}\right),\eeas as $|\sigma|\lra\infty$, fixing some $t=t_1$. Considering $\epsilon$ very small we can have a positive $\kappa$, such that $|F(\sigma+it_1)|=O\left(\frac{1}{e^{\kappa|\sigma|}}\right)$ as $|\sigma|\lra\infty$. Hence by {\it{Lemma \ref{l2.6}}} the result follows.
		\\{\bf{\underline{Case-2}}} When, $\alpha=1$, $d_\mathcal{L}\not=0$. \par As done in {\bf{Case-1}} it can be shown that $\lim_{\sigma\lra-\infty}\frac{\mathcal{L}_1-\mathcal{L}_2}{\mathcal{L}_2-1}=O(\frac{1}{2^{-\sigma}})$ for some fixed $t$.
	\par Now consider the case 	
		$\lim_{\sigma\lra+\infty}\frac{\mathcal{L}_1-\mathcal{L}_2}{\mathcal{L}_2-1}=O(\frac{1}{2^{-\sigma}})$.\\ Now here \beas \frac{\mathcal{L}_1-\mathcal{L}_2}{\mathcal{L}_2-1}=\frac{a_{1_1}-a_{2_1}}{a_{1_1}}+ O\left(\frac{2}{3}\right)^{\sigma},\eeas Clearly $\lim_{\sigma\lra+\infty}\frac{\mathcal{L}_1-\mathcal{L}_2}{\mathcal{L}_2-1}=O(1)\;\text{or}\;0\;\text{or}\;{\infty}$, here we have assumed that $\mathcal{L}_i(s)=\sum_{n=1}^{\infty}\frac{a_{i_n}}{n^s}$. Now from {\it{Lemma \ref{l2.4}}}  and the discussion in  in Subcase-1.2 of {\it{Theorem 1.2}} of {\cite{Kundu_Banerjee_Comp.var}} we can have a $\{T_n\}_{n=1}^{\infty}$ be a sequence of zeros of $\mathcal{L}_i-1\;(i=1,2)$ in some $\sigma<-A\;(\text{for some}\;A>0)$, and $Re\{T_n\}\lra -\infty$ as $n\lra\infty$. Consider a sequence in positive half plane with diverging real part, $\{S'_{n}\;(=1-\ol T_n)\}_{n=1}^{\infty}$. Easy to verify $\chi(1-\ol T_n)\not=0,\; 1$.
		\bea\label{e3.17} \frac{\mathcal{L}_1(1-\ol T_n)-\mathcal{L}_2(1-\ol T_n)}{\mathcal{L}_2(1-\ol T_n)-1}=\frac{\ol {\mathcal{L}_1(T_n)}-\ol {\mathcal{L}_2( T_n)}}{\ol {\mathcal{L}_2( T_n)}- \chi(1-\ol T_n)^{-1}}=0.\eea   
		
		Therefore $\lim_{n\lra+\infty}\frac{\mathcal{L}_1(S'_n)-\mathcal{L}_2(S'_n)}{\mathcal{L}_2(S'_n)-1}=0$, 
		 and hence we have $\frac{\mathcal{L}_1-\mathcal{L}_2}{\mathcal{L}_2-1}=O\left(\frac{2}{3}\right)^{\sigma}$ as $\sigma\lra\infty$. \par Now we have \beas |F(s)|\leq |s-1|^k|s|^le^{3\epsilon|s|}O\left(\frac{2}{3}\right)^{|\sigma|},\eeas as $|\sigma|\lra\infty$, fixing some $t=t_1$. Hence choosing $\epsilon$ small, from {\it{Lemma \ref{l2.6}}} we can have $\mathcal{L}_1=\mathcal{L}_2$.
				\\{\bf{\underline{Case-3}}}	$d_\mathcal{L}=0$ and $\alpha\not=1$. Clearly $\frac{\mathcal{L}_1-\mathcal{L}_2}{\mathcal{L}_2-1}=O\left(\frac{1}{2}\right)^{\sigma}$ as $\sigma\lra \infty$. Also, here $Q>1\;(\text{since}\;d_{\mathcal{L}}=0)$ therefore we have \beas\frac{\mathcal{L}_1-\mathcal{L}_2}{\mathcal{L}_2-1}=\frac{\ol {\mathcal{L}_1(1-\ol s)}-\ol {\mathcal{L}_2(1-\ol s)}}{\ol {\mathcal{L}_2(1-\ol s)}-\alpha Q^{2s-1}}=O\left(\frac{1}{2}\right)^{-\sigma}, \eeas as $\sigma\lra-\infty$. 
				\par Here proceeding similarly as in {\bf{Case-1, 2}} we have the result.

	\end{proof}

\begin{proof}[Proof of Theorem \ref{t1.4}]
Here it is given that $Z^{-}(\mathcal{L}_1-c)=Z^{-}(\mathcal{L}_2-c)$ and $Z^{+}(\mathcal{L}_1-c)=Z^{+}(\mathcal{L}_2-c)$, except at a set $G$ of order one convergence type. Then proceeding similarly as in {\it{Theorem {}}}, we can have a $\hat{h}(s)$ such that $H(s)=\frac{\hat{h}(s)s^l(s-1)^k(\mathcal{L}_1-\mathcal{L}_2)}{\mathcal{L}_2-c}$ is an entire function	and here $\log (\hat{h}(s))=O(e^{|s|})$. \par Here first we will show a relation between $\chi_1(s)$ and $\chi_2(s)$ from the functional equation of $\mathcal{L}_1$, $\mathcal{L}_2$. Suppose $\{s_n\}_{n=1}^{\infty}$ and $\{s'_n\}_{n=1}^{\infty}$ be two distinct sequence of trivial zeros of $\mathcal{L}_1$ and $\mathcal{L}_2$ in the negative half plane. Since $s_n$ and $s'_n$ are distinct so it is possible to get a positive $\delta=min{|s_n-s'_n|}$. Here $\delta$ is the distance between the nearest two distinct zeros of $\mathcal{L}_1$ and $\mathcal{L}_2$ along the sequence $\{s_n\}$ and $\{s'_n\}$. \par From {\it{Lemma \ref{l2.4}}}, and as done in \cite{Kundu_Banerjee_Comp.var}, we can have a sequence $\{t_n\}$ of zeros of $\mathcal{L}_1-c$, and each $t_n$ lies within $|s_n|^{-C_1\log |s_n|}$ of $s_n$. Again $t_n$ is also zero of $\mathcal{L}_2-c$, hence it also lies with in $|s'_n|^{-C_1\log |s'_n|}$ of $s'_n$. Therefore \beas |s_n-s'_n|\leq |s_n-t_n|+|t_n-s'_n|\lra 0\;\text{as}\;n\lra\infty.\eeas
	Therefore for some large $N$, such that  \beas \delta\leq |s_n-s'_n|<\frac{\delta}{2},\;\text{for}\; n>N,\eeas a contradiction. Hence $s_n=t_n$ for $n>N$, and therefore we can have a rational function $R$, so that $\frac{\chi_1(s)}{\chi_2(s)}=R(s)Q^{1-2s}$ where $Q=Q_1/Q_2$.\par Now the real part of $t_n\lra-\infty$ as $n\lra\infty$, using this fact from \beas \frac{\mathcal{L}_1(s)}{\mathcal{L}_2(s)}=R(s)Q^{1-2s}\frac{\ol {\mathcal{L}_1(1-\ol s)}}{\ol {\mathcal{L}_2(1-\ol s)}},\eeas we have $\lim_{n\lra+\infty}R(t_n)Q^{1-2t_n}=1$, implies $Q=1$ and hence we have $\lim_{|s|\lra\infty}R(s)=1$. 
	\par Here we will only consider the case when $c=1$.  \bea\label{e3.18} \frac{\mathcal{L}_1-\mathcal{L}_2}{\mathcal{L}_2-1}=\frac{R(s)\ol {\mathcal{L}_1(1-\ol s)}-\ol {\mathcal{L}_2(1-\ol s)}}{\ol {\mathcal{L}_2(1-\ol s)}- \chi_2(s)^{-1}},\eea As we proceed in {\bf{Case-1}}, for a fixed $t$ from (\ref{e3.18}), $\lim_{\sigma\lra-\infty}\left|\frac{\mathcal{L}_1-\mathcal{L}_2}{\mathcal{L}_2-1}\right|=0$, $\frac{\mathcal{L}_1-\mathcal{L}_2}{\mathcal{L}_2-1}=O\left(\frac{1}{2}\right)^{|\sigma|} $ as $\sigma\lra-\infty$.
	
	\par Now when consider the case $\lim_{\sigma\lra+\infty}\frac{\mathcal{L}_1-\mathcal{L}_2}{\mathcal{L}_2-1}$, clearly the limit can be $0$ or $O(1)$ or diverge. Let's evaluate the limit on the sequence $\{1-\ol t_n\}_{n=1}^{\infty}$.
			\bea\label{e3.19}\lim_{n\lra\infty} \frac{\mathcal{L}_1(1-\ol t_n)-\mathcal{L}_2(1-\ol t_n)}{\mathcal{L}_2(1-\ol t_n)-1}=\lim_{n\lra\infty}\frac{R(1-\ol t_n)\ol {\mathcal{L}_1(t_n)}-\ol {\mathcal{L}_2(t_n)}}{\ol {\mathcal{L}_2(t_n)}- \chi_2(1-\ol t_n)^{-1}}=0.\eea    Hence we have $\lim_{\sigma\lra+\infty}\frac{\mathcal{L}_1-\mathcal{L}_2}{\mathcal{L}_2-1}=O\left(\frac{2}{3}\right)^{|\sigma|} $ as $\sigma\lra+\infty$.\par Therefore we have  $\frac{\mathcal{L}_1-\mathcal{L}_2}{\mathcal{L}_2-1}=O\left(\frac{2}{3}\right)^{|\sigma|}$, for some fixed $t=t_1$. Here it can be shown $H(\sigma+it_1)=O(e^{-\kappa|\sigma|})$, and hence by {\it{Lemma \ref{l2.6}}}, we have $\mathcal{L}_1=\mathcal{L}_2$.
	 
	\end{proof}
\begin{proof}[Proof of Theorem \ref{t1.2}]
	In this theorem we have considered $\mathcal{L}_1$, $\mathcal{L}_2$ are two zero degree $L$-function in $\mathcal{S}^{\#}$ with same functional equation, i.e., $\mathcal{L}_i(s)=Q^{1-2s}\ol {\mathcal{L}_i(1-\ol s)}$, for $i=1,2$ and $Q>1$ (see \cite{Kaczorowski_Perelli}).
	Since $\mathcal{L}_1$ and $\mathcal{L}_2$ share that $\mathcal{L}_1$ and $\mathcal{L}_2$ share the set $S=\{\alpha_1,\alpha_2,\ldots,\alpha_n\}$ CM, and without loss of generality assume $\alpha_1=1$. We can write it as \bea\label{e3.20} \frac{(\mathcal{L}_1-1)(\mathcal{L}_1-\alpha_2)\ldots(\mathcal{L}_1-\alpha_t) }{(\mathcal{L}_2-1)(\mathcal{L}_2-\alpha_2)\ldots(\mathcal{L}_2-\alpha_t) }=e^{as+b},\eea for some complex constants $a=a_1+ia_2$ and $b=b_1+ib_2$; $a_i,\;b_i\in\mathbb{R}$ for $i=1,2$.\\ Now \beas \left|\frac{(\mathcal{L}_1-1)(\mathcal{L}_1-\alpha_2)\ldots(\mathcal{L}_1-\alpha_t) }{(\mathcal{L}_2-1)(\mathcal{L}_2-\alpha_2)\ldots(\mathcal{L}_2-\alpha_t) }\right|=e^{a_1\sigma-a_2t}\\\left|\frac{(\ol {\mathcal{L}_1(1-\ol s)}-Q^{2s-1})\ldots(\ol {\mathcal{L}_1(1-\ol s)}-\alpha_nQ^{2s-1})}{(\ol {\mathcal{L}_2(1-\ol s)}-Q^{2s-1})\ldots(\ol {\mathcal{L}_1(1-\ol s)}-\alpha_nQ^{2s-1})}\right|=e^{a_1\sigma-a_2t},\eeas taking $\sigma\lra-\infty$ we have, $e^{a_1\sigma-a_2t}=1$, which forces $a_1=0$, and then we have $e^{-a_2t}=1\implies a_2=0$ also from (\ref{e3.20}), allowing $\sigma\lra-\infty$ we get $e^b=1$. \beas(\mathcal{L}_i-1)(\mathcal{L}_i-\alpha_2)\ldots(\mathcal{L}_i-\alpha_{n})=\mathcal{L}_i^t+a_1\mathcal{L}_i^{n-1}+a_2\mathcal{L}_i^{n-1}+\ldots+a_{n-1}\mathcal{L}_i+a_n,\eeas for $i=1,2$, where $a_j= (-1)^j \sum_{1\leq i_1<i_2<\ldots<i_j\leq n}\alpha_{i_1}
	\alpha_{i_2}\ldots \alpha_{i_j}\;\;\;j=1,2,\ldots,n$. 
	\par From (\ref{e3.20}) we have \bea\nonumber
	(\mathcal{L}_1^n-\mathcal{L}_2^n)+a_1(\mathcal{L}_1^{n-1}-\mathcal{L}_2^{n-1})+a_2(\mathcal{L}_1^{n-2}-\mathcal{L}_2^{n-2})+\ldots+a_{n-1}(\mathcal{L}_1-\mathcal{L}_2)&=&0,\eea 
	If $\mathcal{L}_1-\mathcal{L}_2\not\equiv0$ then from above we must have \bea\nonumber&&(\mathcal{L}_1^{n-1}+\mathcal{L}_1^{n-2}\mathcal{L}_2+\ldots+\mathcal{L}_2^{n-1})+a_{1}(\mathcal{L}_1^{n-2}+\mathcal{L}_1^{n-3}\mathcal{L}_2+\ldots+\mathcal{L}_2^{n-2})+\ldots+a_{n-1}\equiv0\\\nonumber&&\big({\ol {\mathcal{L}_1(1-\ol s)}}^{n-1}+{\ol {\mathcal{L}_1(1-\ol s)}}^{n-2}{\ol {\mathcal{L}_2(1-\ol s)}}+\ldots+{\ol {\mathcal{L}_2(1-\ol s)}}^{n-1}\big)+a_{1}Q^{(2s-1)}\big({\ol {\mathcal{L}_1(1-\ol s)}}^{n-2}+\ldots\\\label{e3.21}&+&{\ol {\mathcal{L}_2(1-\ol s)}}^{n-2}\big)+\ldots+a_{n-1}Q^{(n-1)(2s-1)}=0,
			\eea considering $\sigma\lra-\infty$ we have from (\ref{e3.21}) $n=0$, contradiction, therefore we must have $\mathcal{L}_1\equiv\mathcal{L}_2$.
\end{proof}



	
	{\section{Funding}}
	The authors have no funding.
	{\section{Disclosure Statement}}
	There exist no conflict of interest and competing interest whatsoever it maybe, between the two authors.


\begin{thebibliography}{90}
		
		\bibitem{Ban_Khoai_Kundu} A. Banerjee, H. H. Khoai and A. Kundu, Uniqueness of L-functions under sharing of sets, J. Theor. Nr. Bordx., $\mathbf{36(3)}$ (2024), 967-985.
		\bibitem{G.F_Springer(1977)} F. Gross, Factorization of meromorphic functions and some open problems Complex Analysis (Proc. Conf. Univ. Kentucky, Lexington, Kentucky, 1976), Lecture Notes in Math. $\mathbf{599}$, Springer-Berlin(1977), 51-69.
			\bibitem{Goenk-Haan-Ki_Math. Z.(14)} S. M. Gonek, J. Haan and H. Ki, A uniqueness theorem for functions in the extended Selberg class, Math. Z., ${\mathbf{278}}$(2014),
		995-1004.
		\bibitem{Hayman} W. K. Hayman, Meromorphic functions, Oxford University Press, Oxford (1964).
		\bibitem{Hu_LI_Can(16)} P. C. Hu and B. Q. Li, A simple proof and strengthening of a uniqueness theorem for $L$-functions, Can. Math. Bull., $\mathbf{59}$(2016), 119-122.
		\bibitem{Kaczorowski_Perelli} J. Kaczorowski and A. Perelli, On the structure of the Selberg class $0\leq d\leq1$, Acta Math.
		$\bf{182(2)}$(1999), 207-241.
		
		\bibitem{Khoai_An_Ramanujan.J} H. H. Khoai and V. H. An, Determining an $L$-function in the extended selberg class by it's preimages of subsets, The Ramanujan J., ${\mathbf{58}}$(2022), 253-267.
		\bibitem{H.Ki_Adv.Math(12)} H. Ki, A remark on the uniqueness of the Dirichlet series with a Riemann-type function
		equation, Adv. Math., $\mathbf{231(5)}$(2012), 2484-2490.
		\bibitem{Kundu_Banerjee_Comp.var} A. Kundu, A. Banerjee, L-functions and meromorphic functions satisfying the same Riemann type functional
		equation and sharing sets, Complex Var. Elliptic Equ., 70(4), 716–734 (doi.org/10.1080/17476933.2024.2338447).
\bibitem{Li_Adv_11}	B. Q. Li, A uniqueness theorem for Dirichlet series satisfying a Riemann type functional
	equation. - Adv. Math. $\mathbf{226(5)}$ (2011), 4198-4211.
		
		\bibitem{B.Q.LI-Proc.Am-10} B. Q. Li, A result on value distribution of $L$-functions, Proc. Am. Math. Soc., $\mathbf{138(6)}$(2010), 2071-2077.
		\bibitem{Li_Math.Z.} B. Q. Li, On the zeros of $L$-functions, Math. Z., $\mathbf{272}$(2012), 1097-1102.
		\bibitem{B.Q.Li_Proc.am(21)} B. Q. Li, On the number of zeros and poles of Dirichlet series, Trans. Amer. Math. Soc., $\mathbf{370}$(2018), 3865-3883.
		\bibitem{Lin-Lin-filomat}P. Lin and W. Lin,  Value distribution of $L$-functions concerning sharing sets, Filomat., $\mathbf{30}$(2016), 3795-3806.
		\bibitem{Selberg-92} A. Selberg, Old and new conjectures and results about a class of Dirichlet series, in: Proceedings of the Amalfi	Conference on Analytic Number Theory (Maiori, 1989), Univ. Salerno, Salerno (1992), 367-385.
		
		\bibitem{Steuding_Sprin(07)} J. Steuding, Value Distribution of $L$-Functions, Lect. Notes Math., Vol. 1877, Springer, Berlin(2007).
		\bibitem{Li_Du_Yi_Comp.var(22)} X. M. Li, X. R. Du and H. X. Yi, Dirichlet series satisfying a Riemann type functional equation
		and sharing one set, Complex Var. Elliptic Equ., ${\mathbf{68(10)}}$(2023), 1653-1677.
		\bibitem{C.C Yang-H.X.Yi}C. C. Yang and H. X. Yi, Uniqueness Theory of Meromorphic Functions, Math. Appl., 557, Kluwer Academic P., Dordrecht, 2003.
		
		\bibitem{Yan-Li-Yi_Lith} Q. Q.  Yuan, X. M.  Li, and H. X. Yi, Value distribution of $L$-functions and uniqueness questions of F. Gross, Lithuanian Math. J., $\mathbf{58(2)}$(2018), 249-262.	
		
		
		
		
	\end{thebibliography}
\end{document}